\newcommand{\R}{\mathds R}
\newcommand{\I}{\mathds 1}
\def\d{{\rm d}}
\def\<{\langle}
\def\>{\rangle}
 \def\ss{\sqrt}
\def\R{\mathbb R}   \def\ss{\sqrt} 
  \def\vv{\varepsilon} 
\def\<{\langle} \def\>{\rangle}  
  \def\nn{\nabla}  
\def\d{\text{\rm{d}}}   
 \def\beq{\begin{equation}}  
\def\e{\text{\rm{e}}}
\def\E{\mathbb E}
\def\to{\rightarrow}
\def\8{\infty}\def\3{\triangle}
\def\1{\lesssim}
\renewcommand{\bar}{\overline}
\renewcommand{\tilde}{\widetilde}
\newtheorem{theorem}{Theorem}[section]
\newtheorem{lemma}[theorem]{Lemma}
\newtheorem{proposition}[theorem]{Proposition}
\newtheorem{corollary}[theorem]{Corollary}
\theoremstyle{definition}
\newtheorem{remark}[theorem]{Remark}
\numberwithin{equation}{section}
\begin{document}
\allowdisplaybreaks

\title[Stochastic Hamiltonian systems with $\alpha$-stable L\'evy noises] {$L^2$-exponential ergodicity of stochastic Hamiltonian systems with $\alpha$-stable L\'evy noises}

\author{
Jianhai Bao\qquad
Jian Wang}
\date{}
\thanks{\emph{J.\ Bao:} Center for Applied Mathematics, Tianjin University, 300072  Tianjin, P.R. China. \url{jianhaibao@tju.edu.cn}}

\thanks{\emph{J.\ Wang:}
School  of Mathematics and Statistics \&  Fujian Key Laboratory of Mathematical
Analysis and Applications (FJKLMAA) \&  Center for Applied Mathematics of Fujian Province (FJNU), Fujian Normal University, 350007 Fuzhou, P.R. China. \url{jianwang@fjnu.edu.cn}}

\maketitle

\begin{abstract}
 Based on the hypocoercivity approach due to Villani \cite{Villani}, Dolbeault, Mouhot and Schmeiser \cite{DMS} established  a new and simple framework to investigate directly  the $L^2$-exponential convergence to the equilibrium for the solution to the kinetic Fokker-Planck equation. Nowadays, the general framework advanced  in \cite{DMS} is named as the DMS framework for hypocoercivity.
Subsequently, Grothaus and Stilgenbauer \cite{Grothaus} builded a dual version of
the DMS framework  in the kinetic Fokker-Planck setting. No matter what the abstract DMS framework in \cite{DMS} and the dual counterpart in
\cite{Grothaus}, the densely defined linear operator involved
  is assumed
  to be decomposed into two parts, where one part is symmetric and the other part is anti-symmetric.
  Thus, the
  existing DMS framework
  is not applicable to investigate the $L^2$-exponential ergodicity for stochastic Hamiltonian systems with $\alpha$-stable L\'{e}vy noises, where one part of the
  associated infinitesimal generators is anti-symmetric whereas the other part is not symmetric. In this paper, we shall develop
 a dual version of the
 DMS framework in the  fractional kinetic Fokker-Planck setup, where one part of the densely defined linear operator under consideration need not to be symmetric. As a direct application,
 we explore  the $L^2$-exponential ergodicity of stochastic Hamiltonian systems
with $\alpha$-stable L\'evy noises. The proof is also based on Poincar\'e inequalities for non-local stable-like Dirichlet forms and the potential theory for fractional Riesz potentials.
\medskip

\noindent\textbf{Keywords:} fractional kinetic Fokker-Planck operator; stochastic Hamiltonian system
with $\alpha$-stable L\'evy noise; Poincar\'e inequality; fractional Riesz potential

\smallskip

\noindent \textbf{MSC 2020:} 60H10, 35Q84, 60J60.
\end{abstract}
\section{Introduction and Main Result}
\subsection{Background}
In physics, the Hamiltonian system, as a mathematical  formalism due to W.R. Hamilton, describes the evolution of particles in   physical systems. From the perspective on practical applications, the deterministic Hamiltonian systems   are often subject to environmental noises. Then, the environmentally perturbed system, named as the stochastic Hamiltonian system in literature, is brought into being.  So far, the stochastic Hamiltonian systems have been applied ubiquitously (see e.g. \cite{HS}) in  finance describing some risky assets, in physics portraying the synchrotron oscillations of particles in storage rings due to the impact of external fluctuating electromagnetic fields, and in   stochastic optimal control  serving as  a stochastic version of the maximum
principle of Pontryagin's type, to name a few.

With regard to the mathematical formulation, the stochastic Hamiltonian system is described by
the following degenerate stochastic differential equations (SDEs) on $\R^{2d}:=\R^d\times\R^d:$
\begin{equation}\label{D1}
\begin{cases}
\d X_t=\nn_vH(X_t,V_t)\,\d t,\\
\d V_t=-\big(\nn_x H(X_t,V_t)+F(X_t,V_t)\nn_vH(X_t,V_t)\big)\,\d t+\d Z_t,
\end{cases}
\end{equation}
where $H$ is the Hamiltonian function, $\nn_xH$ and  $\nn_vH$  stand  for the first order gradient operator with respect to the position variable $x$ and the velocity variable $v$, respectively, $F$ means the damping
coefficient,
 and $(Z_t)_{t\ge0}$ is a $d$-dimensional stochastic noise.
Throughout the paper, in some occasions, we frequently use the simplified notation $\nn$ to denote the gradient operator in case that there are no confusions evoked. In particular, when
$H(x,v)=U(x)+\frac{1}{2}|v|^2$ and $(Z_t)_{t\ge0}=(B_t)_{t\ge0}$, a $d$-dimensional standard Brownian motion,
 \eqref{D1} reduces to the stochastic damping Hamiltonian systems:
\begin{equation}\label{D2}
\begin{cases}
\d X_t= V_t \,\d t,\\
\d V_t=-(\nn  U(X_t)+F(X_t,V_t) V_t )\,\d t+\d B_t,
\end{cases}
\end{equation}
where $U$ might incorporate the confining potentials and the interaction potentials (e.g., the Lennard-Jones potential and the Coulomb potential).

In the past few years, great progresses have been made on the ergodicity of stochastic Hamiltonian systems \eqref{D2} with regular potentials. For the polynomial-like potential $U$, the exponential ergodicity under the total variation distance was addressed  in \cite{Talay,Wu} with the aid of Harris' theorem.
By making use of the mixture of the reflection coupling  and the synchronous coupling, concerning the Langevin equations (i.e., \eqref{D2} with  $F(x,v)\equiv1$),
the exponential contractivity under the quasi-Wasserstin distance was tackled in \cite{EGZ}.  Recently,    the Langevin dynamics with singular potentials has also
been
received more and more attention since the interaction potentials  exhibit certain singular features. In particular, the geometric ergodicity under the total variation distance of Langevin dynamics with singular potentials has been investigated in depth via examining  Harris' theorem;
see \cite{HM} for the setting on the Lennard-Jones type interactions and \cite{LM} concerning the setup on the Coulomb interactions, respectively.

In comparison to stochastic Hamiltonian systems subject to Brownian motion  noises, the long term behavior of the counterparts environmentally perturbed by pure jump L\'{e}vy processes is rare. All the same, there are some progresses on the ergodicity of stochastic Hamiltonian systems with pure jumps in recent years. In \cite{BW},  concerning  stochastic Hamiltonian systems with pure jumps and regular potentials,  by designing a novel Markov  coupling approach,
 we dealt with the exponential ergodicity  under  the multiplicative Wasserstein type distance.
 In the meantime, based on distinctive  constructions of Lyapunov functions and the Hörmander theorem for non-local operators, the exponential ergodicity under the total variation distance was explored  \cite{BFW} via Harris' theorem     for L\'{e}vy driven Langevin dynamics, where
 the singular potentials might be the Coulomb potentials or the
 Lennard-Jones-like potentials.

\ \

Besides the exponential ergodicity under the total variation or the Wasserstein type distance, there are plenty  of works that are devoted to the $L^2$-exponential ergodicity. We recall some facts related to it.
Let  $(X_t)_{t\ge0}$ be a Markov process generating a Markov semigroup $(P_t)_{t\ge0}$, and the probability measure $\mu$ be an invariant probability measure of $(P_t)_{t\ge0}$. The Markov process $(X_t)_{t\ge0}$ is called $L^2$-exponentially  ergodic, if there exist constants $c,\lambda>0$ such that for all $f\in L^2(\mu)$ and $t>0$,
\begin{equation}\label{D00}
\mbox{Var}_\mu(P_tf)\le c\,\e^{-\lambda t}\mbox{Var}_\mu(f),
\end{equation}
where $ \mbox{Var}_\mu(f):=\mu(f^2)-\mu(f)^2$ with $\mu(f):=\int f\d \mu.$ So far, the $L^2$-exponential ergodicity above has multiple applications.
For instance, the explicit bounds involved in  \eqref{D00} may provide insights into effectiveness of stochastic algorithms. In particular,
the explicit constants $c,\lambda>0$ in \eqref{D00} furnish an upper bound on the integrated autocorrelation, which indeed is  a performance measure of Monte Carlo estimators; see, for instance, \cite{ADNR}. On the other hand, the $L^2$-exponential ergodicity implies characterization of convergence to equilibrium in the other regimes; see \cite[Chapter 8]{Chen} for a very nice diagram of nine types of ergodicity.

For symmetric Markov processes, one of the powerful tools to investigate ergodicity (under, for example,  variance or relative entropy)
is the functional inequality (e.g., the Poincar\'{e} type inequality and the log-Sobolev
inequality). Concerned with a symmetric Markov process under investigation, the corresponding Markov semigroup is $L^2$-exponentially decay
once the associated Poincar\'{e} inequality is valid;
 see \cite[Theorem 1.1.1, p.24]{Wang} or \cite[Theorem 4.2.5, p.183]{BGL}. Whereas, as far as  non-symmetric Markov processes are concerned, the situation will be drastically different.
To demonstrate this aspect, we focus on \eqref{D1} with $H(x,v)=U(x)+\Phi(v)$ for some smooth functions $U$ and $\Phi$, $F(x,v)\equiv1$, and $(Z_t)_{t\ge0}=(B_t)_{t\ge0}$, a $d$-dimensional Brownian motion; that is,
\begin{equation}\label{E0-}
\begin{cases}
\d X_t=  \nn \Phi(V_t)  \,\d t,\\
\d V_t=-\big(\nn U(X_t)+ \nn\Phi(V_t) \big)\,\d t+\d B_t.
\end{cases}
\end{equation}
 If the prerequisite $C_{U,\Phi}:=\iint_{\R^d\times\R^d}\e^{-(U(x)+\Phi(v))}\,\d x\,\d v<\8$ holds true, then the probability measure
\begin{equation}\label{E1-}
\mu(\d x,\d v):= C_{U,\Phi}^{-1}\e^{-(U(x)+\Phi(v))}\,\d x\,\d v
\end{equation}
is an invariant probability measure of the Markov semigroup $(P_t)_{t\ge0}$ corresponding to the Markov process $(X_t,V_t)_{t\ge0}$. Due to the invariance of $\mu$, we have
 \begin{equation*}
\partial_t\mu((P_tf)^2)=-2\mu(\Gamma(P_tf)),
\end{equation*}
 where   $\Gamma (f)=\frac{1}{2}|\nn _vf|^2$ is the Carr\'{e} du champ operator;  see \cite[p.20-22 \& p.122-125]{BGL}. If there exists a constant $c_0>0$ such that
 the Poincar\'{e} inequality:
  \begin{equation}\label{EE-}
  \mbox{Var}_\mu(f)\le c_0\,\mu(\Gamma(f)),\qquad  f\in H^1(\mu)
\end{equation}
 holds true, then the $L^2$-exponential ergodicity of
  $(X_t,V_t)_{t\ge0}$ (or the semigroup $(P_t)_{t\ge0}$ is $L^2$-exponentially decay) follows from Gronwall's inequality. Nevertheless, due to $\Gamma (f)=\frac{1}{2}|\nn _vf|^2$ for $f\in H^1(\mu)$,
 the energy form $\mathscr{E}(f):=\mu(\Gamma(f))$  is reducible since the $x$-directions in $\Gamma$ are missing. Hence,  the Poincar\'{e} inequality \eqref{EE-} is not  any more valid.

Obviously,
 the infinitesimal generator of the Markov process $(X_t,V_t)_{t\ge0}$ solving \eqref{E0-} is given by
\begin{equation}\label{EE0-}
\begin{split}
(\mathscr Lf)(x,v)= &\big(\<\nn_xf(x,v),\nn \Phi(v)\>-\<\nn_vf(x,v),\nn U(x)\>\big)\\
&+\Big(  -\<\nn_vf(x,v),\nn \Phi(v)\>+\frac{1}{2}\Delta_vf(x,v) \Big)\\
=&:(\mathscr L_af)(x,v)+ (\mathscr L_sf)(x,v)
\end{split}
\end{equation}
for all $f\in C_b^2(\R^{2d})$. Under appropriate conditions imposed on the potential $U$,   the   kinetic Fokker-Planck equation corresponding to  \eqref{E0-} with  $\Phi(v)=\frac{1}{2}|v|^2$
 \begin{equation}\label{E-}
 \partial_th=\mathscr L^*h
 \end{equation}
is well posed,  where $\mathscr L^*$ is the $L^2(\mu)$-adjoint operator of $\mathscr L$. In \cite{Villani}, Villani initiated
the reputable   hypocoercivity approach, which
 has been applied successfully in coping  with exponential convergence of the solution $h$ to \eqref{E-}
 in the $H^1(\mu)$-sense, in the $L^2(\mu)$-sense, and in the relative entropy sense, respectively.
Particularly, in order to obtain the $L^2$-exponential convergence, an additional  $L^2$-gradient estimate need to be provided; see, for example, \cite[Remark 3.3]{BGH}. Later, based on a crucial source of inspiration from \cite{Herau},
 Dolbeault, Mouhot, and Schmeiser \cite{DMS} established  a new and simple framework to investigate directly  the $L^2$-exponential convergence of the solution $h$  to \eqref{E-} by examining conveniently   coercivity inequalities, an algebraic relation, and boundedness of auxiliary operators.
In comparison with the  hypocoercivity strategy in \cite{Villani}, the outstanding feature of the abstract setting advanced in \cite{DMS}
lies  in its succinctness and directness, and, most importantly,  bypassing the examination of  the $L^2$-gradient estimate in short time.
Nowadays, the general framework developed  in \cite{DMS} is named as the DMS framework for hypocoercivity in the literature.
Subsequently, the DMS framework  in the Fokker-Planck setting  was extended further in \cite{Grothaus}  to study the long-time behavior of strongly continuous semigroups generated by Kolmogorov backward operators. In particular, as an important application, the $L^2$-exponential ergodicity of the degenerate spherical
velocity Langevin equation was handled  in \cite{Grothaus}.
See \cite{CLW} for the recent study on more refined explicit estimates of exponential decay rate of underdamped Langevin
dynamics in the $L^2$-distance.
Meanwhile, the authors in \cite{GW}  went a step further to generalize
the DMS general framework and to tackle the $L^2$-algebraic ergodicity of \eqref{E0-}.  Additionally, \cite{ADNR} and \cite{ADW}
formulated a symmetrization-antisymmetrization version of the DMS setup so that the geometric hypocoercivity and the subgeometric hypocoercivity for piecewise-deterministic Markov process Monte Carlo methods can be established.
Also see \cite{LW} for explicit $L^2$-exponential convergence rates for a class of piecewise deterministic-Markov processes for sampling.

Regardless of the abstract DMS framework in \cite{DMS,Villani} and the dual counterpart in
\cite{Grothaus,GW}, the densely defined linear operator $\mathscr L$ involved, which generates a strongly continuous $C_0$-contractive semigroup, is assume to be decomposed into two parts, where one part is symmetric and the other part is anti-symmetric.
In \eqref{EE0-}, $\mathscr L_a$ is $L^2(\mu)$-antisymmetric and $\mathscr L_s$ is $L^2(\mu)$-symmetric
so that the DMS setups in \cite{Grothaus} and \cite{GW}  are applicable to investigate  the $L^2$-exponential ergodicity and the   $L^2$-subexponential ergodicity  of the Markov semigroup associated with \eqref{E0-}, respectively.

\subsection{Setting} As mentioned before, in certain scenarios, the deterministic Hamiltonian systems are   influenced  by random fluctuations with discontinuous sample paths rather than continuous sample paths.  In such context, the $d$-dimensional noise process $(Z_t)_{t\ge0}$ can be modeled naturally by a pure jump L\'{e}vy process (for example, a symmetric $\alpha$-stable process) so the formulation \eqref{E0-} need to be modified accordingly. More precisely, replacing the Brownian motion   $(B_t)_{t\ge0}$  by
 a symmetric $\alpha$-stable process $(L_t)_{t\ge0}$ enables us to reformulate \eqref{E0-}    as below:
\begin{equation}\label{E--}
\begin{cases}
\d X_t=  \nn \Phi(V_t) \, \d t,\\
\d V_t=-\nn U(X_t)\,\d t- \nn\Phi(V_t)  \,\d t+\d L_t.
\end{cases}
\end{equation}

\ \

Superficially, there are no essential distinctions  between the SDE \eqref{E0-} and the SDE \eqref{E--}  by changing merely noise patterns.
Whereas, plenty of intrinsic  changes are to be encountered. Most importantly, the probability measure $\mu$ introduced in \eqref{E1-} is no longer an invariant probability measure of the Markov process $(X_t,V_t)_{t\ge0}$ solving \eqref{E--}. Concerning SDEs with jumps even for non-degenerating cases,
 the problem on
solving the explicit expressions of invariant probability measures is a tough task and is impossible for
almost all of scenarios. This is the prime issue we must be confronted with when we explore the $L^2$-exponential ergodicity for stochastic Hamiltonian systems with L\'{e}vy noises.
In spite of this, it is still possible to figure out the closed form of invariant probability measures for jump diffusions with special structures; see, for instance, \cite{HMW,SZTG} for related details.
To guarantee that $\mu$ given in  \eqref{E1-} is still an invariant probability measure, we need to alter  the drift term of \eqref{E--} in a suitable manner.  There are several different ways to
amend the drift term so that our purpose can be achieved. One of the  potentials   is that the drift term   $\nn \Phi$  in the position component is kept while the drift part $-\nn \Phi$ in the velocity component is substituted  by
\begin{equation}\label{W}
b_\Phi(v):=\e^{\Phi(v)}\nn \big((-\Delta)^{{\alpha}/{2}-1}\e^{-\Phi(v)}\big),\quad v\in\R^d,
\end{equation}
when $d>2-\alpha$. Herein, $(-\Delta)^{{\alpha}/{2}-1}$ is the fractional Laplacian operator defined via the inverse of the Riesz potential (see e.g. \cite[Definition 2.11]{K}).
In this context, \eqref{E--}
can be  rewritten as
\begin{equation}\label{E1}
\begin{cases}
\d X_t= \nn \Phi(V_t) \d t,\\
\d V_t=\big(-\nn U(X_t)+b_\Phi(V_t)\big)\d t+\d L_t.
\end{cases}
\end{equation}
The detail that $\mu$  defined by  \eqref{E1-}
is an invariant probability measure of $(X_t,V_t)_{t\ge0}$ determined by \eqref{E1} will be elaborated in Lemma \ref{IPM} below. In fact, given the local equilibrium $F(v)=\e^{-\Phi(v)}$, the friction force $b_\Phi$ defined by \eqref{W} is the solution to the fractional Fokker-Planck equation:
\begin{equation*}
(-\Delta)^{\alpha/2}F+\mbox{div}_v(b_\Phi F)=0.
\end{equation*}
See e.g. \cite[p.1048]{BDL} for related details. Note that, when $\alpha=2$, it is easy to see that $b_\Phi$ defined by \eqref{W} is reduced into $-\nn\Phi(v)$, which coincides with the counterpart in the Brownian motion setting.
In addition, \cite{SZTG} provided    another alternative of the drift term $b_\Phi$, where the $i$-th component $b_{\Phi,i}$ is given by
\begin{equation}\label{W1}
 b_{\Phi,i}(v):=\e^{\Phi(v)}\mathcal D_{v_i}^{\alpha-2}(\e^{-\Phi(v)}\partial_i \Phi(v)),
\end{equation}
where $\mathcal D$ means the fractional
Riesz derivative defined via the Fourier transform and the inverse Fourier transform, and $\partial_i$ stands for the
 partial derivative with respect to the $i$-th component $v_i$. In comparison to $b_\Phi$ defined in \eqref{W1}, $b_\Phi$,   introduced in \eqref{W},  is much more explicit.  Based on this point of view, in this work we are interested in the stochastic Hamiltonian system \eqref{E1}, where $b_\Phi$ is defined in \eqref{W} rather than in \eqref{W1}.

\ \

The infinitesimal generator $\mathscr L$ of $(X_t,V_t)_{t\ge0}$ solving \eqref{E1} is given by,  for all $f\in C_b^2(\R^{2d})$,
\begin{equation}\label{W2}
\begin{split}
(\mathscr Lf)(x,v)&=\big(\<\nn \Phi(v),\nn_xf(x,v)\>-\<\nn U(x),\nn_vf(x,v)\>\big)\\
&\quad+\big(\<b_\Phi(v),\nn_vf(x,v)\>-(-\Delta_v )^{{\alpha}/{2}}f(x,v)\big)\\
&=:(\mathscr L_0f)(x,v)+(\mathscr L_1f(x,\cdot))(v),
\end{split}
\end{equation}
where for any $g\in C_b^2(\R^d)$,
\begin{equation}\label{D0}
(\mathscr L_1g)(v):=\<b_\Phi(v),\nn g(v)\>-(-\Delta  )^{{\alpha}/{2}}g( v).
\end{equation}
By the chain rule, we find that $\mathscr L_0$ is $L^2(\mu)$-antisymmetric while $\mathscr L_1$ is not $L^2(\mu)$-symmetric so
the DMS abstract framework \cite{Grothaus} is not applicable to investigate the $L^2$-exponential ergodicity of \eqref{E1}. 
Therefore, another challenge in treating the $L^2$-exponential ergodicity of \eqref{E1} lies in  the nonsymmetric property of $\mathscr L_1$. To deal with the trouble brought on by the nonsymmetric property of $\mathscr L_1$, by following essentially the line of \cite{Grothaus,GW}, we shall establish an improved version of the general DMS
framework, where it is of great importance that the densely defined linear operator involved  need not to possess  a symmetric part. Once the novel  framework is available,  as an application,
 the $L^2$-exponential ergodicity of \eqref{E1} can be addressed. The detailed expositions of the
 preceding tasks will be presented sequentially in the following sections.
Finally, we  want to mention that the $L^2$-analytical properties of fractional
kinetic equations have received great interest recently. In particular, a new $L^2$-hypocoercivity approach has been developed in \cite{BDL} to
establish a
rate of decay compatible with the fractional diffusion limit for fractional kinetic equations without confinement. However, there are essential differences concerning the setting and the approach associated with  \cite{BDL} and the present paper. For example, the reference measure with respect to the $L^2$-hypocoercivity in \cite{BDL} is $L^2$-$(\R^{2d};\d x \d v)$ with a proper unbounded weighted function, while here we consider the $L^2$-exponential decay with respect to the invariant probability measures $\mu(\d x,\d v)$. Most importantly, the approach in \cite{BDL} is based on the fractional Nash type inequality, whereas  herein we apply the Poincar\'e inequality for non-local stable-like Dirichlet forms established in \cite{CW,WW,WJ}.

\subsection{Main result} To state our main result, we need to present assumptions on the coefficients $U$ and $\Phi$ in \eqref{E1}. First, concerning the potential $U$, we assume that
\begin{enumerate}
\item[$({\bf A}_U)$] {\it The term $U:\R^d\to\R_+$ satisfies  the following two assumptions}\,:
\begin{enumerate}
\item[(${\bf A}_{U,1}$)]{\it $U\in C^\8(\R^d;\R_+)$ is a compact function such that $x\mapsto \e^{-U(x)} $ is integrable and $x\mapsto\nn U(x)$
is a compact function; moreover,  there exist constants $c_1,c_2>0$ such that for all $x\in\R^d,$
\begin{equation}\label{D10}
\|\nn^2U(x)\|\le c_1|\nn U(x)|+c_2,
\end{equation}
where $\nn^2$ means the second order gradient operator and $\|\cdot\|$ denotes the operator norm.}
\item[(${\bf A}_{U,2}$)]\it  $$\liminf_{|x|\to\infty}\frac{\langle \nabla U(x),x\rangle}{|x|}>0.$$
\end{enumerate}
\end{enumerate}

With regard to  the function $\Phi$, we assume that

\begin{enumerate}
\item[$({\bf A}_{\Phi})$] \it The term $\Phi:\R^d\to\R_+$ satisfies  the following assumptions\,{\rm:}
\begin{enumerate}
 \item[$({\bf A}_{\Phi,1})$] The function $\Phi\in C^3(\R^d;\R_+)$ is radial such that $\Phi(v)=\psi(|v|^2)$ for all $v\in \R^d$ and some $\psi\in C^3(\R_+;\R_+)$; moreover,  $|v|\mapsto \Phi(|v|)$ is non-decreasing, and $v\mapsto\e^{-\Phi(v)}$ is integrable.

 \item[$({\bf A}_{\Phi,2})$]
 $$\|\nabla \Phi\|_\infty
 +\|\nabla^3\Phi\|_\infty<\infty$$
 and $$\sup_{v\in \R^d} \left(\|\nabla^2\Phi(v)\||v|\right)<\infty,\quad \sup_{v\in \R^d} |\Phi(v)-\Phi(v/2)|<\infty,$$
 where $\nn^3$ means the third order gradient operator.
 Moreover, there exist constants $c^*,v^*>0$ such that for all $v\in\R^d$ with $|v|\ge v^*,$
\begin{equation*}
 \sup_{u\in B_1(v)}\|\nn^i\Psi (u)\|\le c^*\|\nn^i\Psi (v)\|,\quad i=1,2,3,
\end{equation*}
where $B_1(v)$ denotes the unite ball with the center $v$ and the radius $1$.

\item[$({\bf A}_{\Phi,3})$]
 $$\int_{\R^d} \frac{\e^{\Phi(v)}}{(1+|v|)^{2(d+\alpha)}}\,\d v<\infty.$$

\item[$({\bf A}_{\Phi,4})$]
$$\liminf_{|v|\to\infty} \frac{\e^{\Phi(v)}}{|v|^{d+\alpha}}>0.$$
\end{enumerate}
\end{enumerate}

Before we proceed, let's make some comments on the Assumptions (${\bf A}_U$) and (${\bf A}_\Phi$).
\begin{remark}
The Assumption (${\bf A}_{U,1}$) is imposed mainly to guarantee that the Poisson equation $(I-\mathscr L_{OD})f=h$
has a unique classical solution, where $\mathscr L_{OD}$
is  the infinitesimal generator   of overdamped Langevin dynamics
corresponding to the operator $\mathscr L_0$ given in \eqref{W2}; see Lemma \ref{lem3.5} for more details.
Regularity estimates of the solution  to the Poisson equation play a vital role in the following analysis. Concerning the
 Assumption (${\bf A}_{U,2}$), it  is one of the sufficient conditions ensuring that the $x$-marginal of the invariant probability measure $\mu$ defined in \eqref{E1-}
satisfies the Poincar\'{e}  inequality (see \eqref{E4} below); see, for instance, \cite[Corollary 1.6]{BBCG}.

The structure $\Phi(v)=\psi(|v|^2)$
 for some $\psi\in C^2(\R_+;\R_+)$, besides the uniform boundedness of $\nn \Phi$ and the integrability of the function $x\mapsto\e^{-U(x)}$, ensures that the sufficient criteria $ ({\bf H}_1)$ and $ ({\bf H}_2)$  in the abstract DMS framework is valid.
 In particular, under $({\bf A}_{\Phi,1})$ and $({\bf A}_{\Phi,2})$, it holds that $ \lim_{|v|\to\8}|\nn\e^{-\Phi(v)}|=0$,
 which
  enables us to establish a crucial link between the operators $\pi\mathscr L_0^2\pi$
 and $\mathscr L_{OD}$. This transfers the boundedness of one part for the auxiliary operator  into   estimates of the solution to the Poisson equation (see Lemma \ref{lem3.7}). Furthermore,
  the uniform boundedness and the  integrable conditions involved in $({\bf A}_{\Phi,2})$ and $({\bf A}_{\Phi,3})$ also   yields the boundedness of the other part of the auxiliary operator; see Proposition \ref{P4}  below.
 $({\bf A}_{\Phi,4})$ provides a sufficiency so that the $v$-marginal of the invariant probability measure $\mu$ in \eqref{E1-} satisfies the Poincar\'{e} inequality (see \eqref{E14} below), where the corresponding energy is a non-local stable-like Dirichlet form.
\end{remark}
The main result in the present paper is presented as below.
\begin{theorem}\label{thm1}
Assume that $d>2-\alpha$, and that both $($${\bf A}_U$$)$ and $($${\bf A}_\Phi$$)$
are satisfied. Then, the process $(X_t,V_t)_{t\ge0}$ solving \eqref{E1} is $L^2$-exponentially ergodic, i.e., there exist constants $c,\lambda>0$ such that for all $f\in L^2(\mu)$ and $t>0$,
\begin{equation*}
{\rm Var}_\mu(P_tf)\le c\,\e^{-\lambda t}{\rm Var}_\mu(f),
\end{equation*}where $(P_t)_{t\ge0}$ is the Markov semigroup generated by $(X_t,V_t)_{t\ge0}$  and $\mu$, defined in \eqref{E1-}, is an invariant probability measure of $(P_t)_{t\ge0}$.
\end{theorem}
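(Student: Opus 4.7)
The plan is to deduce Theorem \ref{thm1} from an abstract hypocoercivity framework tailored to the situation where the generator $\mathscr L=\mathscr L_0+\mathscr L_1$ has an antisymmetric part $\mathscr L_0$ and a second part $\mathscr L_1$ that is $L^2(\mu)$-\emph{dissipative but not symmetric}. Following the Grothaus--Stilgenbauer dual version of the DMS strategy, I would introduce the orthogonal projection $\pi:L^2(\mu)\to L^2(\mu)$ onto functions depending only on $x$, and consider the modified squared-norm functional
\begin{equation*}
\mathcal H_\varepsilon[f]:=\tfrac12\|f\|_{L^2(\mu)}^2+\varepsilon\,\langle \mathcal A f,f\rangle_{L^2(\mu)},\qquad \mathcal A:=\bigl(I+(\pi\mathscr L_0)^*(\pi\mathscr L_0)\bigr)^{-1}(\pi\mathscr L_0)^*,
\end{equation*}
with $\varepsilon>0$ small. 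The target is to show $\frac{\d}{\d t}\mathcal H_\varepsilon[P_tf-\mu(f)]\le -\lambda\,\mathcal H_\varepsilon[P_tf-\mu(f)]$, which by equivalence of $\mathcal H_\varepsilon$ with $\|\cdot\|^2$ will yield \eqref{D00}. The algebraic identity $\pi\mathscr L_0\pi=0$, which follows from $\langle\nabla\Phi(v)\rangle_{\mu_v}=0$ under the radiality in $({\bf A}_{\Phi,1})$, remains valid here and is what allows a non-symmetric $\mathscr L_1$ to be accommodated: only a one-sided dissipativity bound and the mixing estimates for $\mathcal A\mathscr L$ enter.

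Next, I would verify the two \emph{coercivity} ingredients. \emph{Microscopic coercivity}, namely $-\langle\mathscr L_1 f,f\rangle_\mu\ge \kappa_1\|(I-\pi)f\|^2_\mu$, reduces, via the invariance identity for $\mu$ proved in Lemma \ref{IPM} and integration by parts, to the non-local Poincaré inequality for the $v$-marginal
\begin{equation*}
\mbox{Var}_{\mu_v}(g)\le C\iint_{\R^d\times\R^d}\frac{(g(v)-g(w))^2}{|v-w|^{d+\alpha}}\e^{-\Phi(v)}\e^{-\Phi(w)}\,\d v\,\d w,
\end{equation*}
which, under $({\bf A}_{\Phi,4})$ and the regularity hypotheses on $\Phi$, follows from the stable-like Dirichlet form Poincaré inequalities of \cite{CW,WW,WJ}. \emph{Macroscopic coercivity} $\|\pi\mathscr L_0 g\|_\mu\ge \kappa_2\|g-\mu(g)\|_\mu$ for $g=\pi g$ reduces, through the chain rule, to the classical Poincaré inequality for the $x$-marginal $C_U^{-1}\e^{-U(x)}\d x$, which holds by Bakry--Barthe--Cattiaux--Guillin \cite{BBCG} under $({\bf A}_{U,2})$.

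The real work, and the main obstacle, lies in the \emph{boundedness of the auxiliary operators} $\mathcal A\mathscr L_0(I-\pi)$ and $\mathcal A\mathscr L_1$ on $L^2(\mu)$. The natural route is to write $\mathcal A=\bigl(I+(\pi\mathscr L_0)^*(\pi\mathscr L_0)\bigr)^{-1}(\pi\mathscr L_0)^*$ and identify $(\pi\mathscr L_0)^*(\pi\mathscr L_0)$, on functions of $x$ only, with the overdamped Langevin generator $\mathscr L_{OD}$ up to a scalar factor coming from $\int|\nabla\Phi(v)|^2\e^{-\Phi(v)}\d v$; this is precisely the link alluded to after \eqref{W2}, and depends crucially on $\lim_{|v|\to\infty}|\nabla\e^{-\Phi(v)}|=0$ from $({\bf A}_{\Phi,1})$--$({\bf A}_{\Phi,2})$. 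Consequently, estimating $\mathcal A\mathscr L_0(I-\pi)f$ and $\mathcal A\mathscr L_1 f$ reduces to obtaining $C^1$- and weighted $C^2$-bounds for the classical solution $u$ of the Poisson equation $(I-\mathscr L_{OD})u=h$, which is where $({\bf A}_{U,1})$, and in particular the Hessian bound \eqref{D10}, are used. The non-symmetry of $\mathscr L_1$ complicates the control of $\mathcal A\mathscr L_1$: one has to bound not only $\langle b_\Phi,\nabla_v\cdot\rangle$ but also the fractional term $(-\Delta_v)^{\alpha/2}$ acting on the lift of $u$; here I would combine the regularity/decay assumptions $({\bf A}_{\Phi,2})$--$({\bf A}_{\Phi,3})$ with potential-theoretic bounds for the Riesz kernel $(-\Delta)^{\alpha/2-1}\e^{-\Phi}$ entering the explicit form \eqref{W} of $b_\Phi$, exactly as flagged in the remark after the assumptions. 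Once both auxiliary operators are bounded, the standard DMS/Grothaus algebraic manipulation of $\frac{\d}{\d t}\mathcal H_\varepsilon$ yields, for $\varepsilon$ small enough, the required exponential decay with an explicit rate $\lambda$ depending on $\kappa_1$, $\kappa_2$ and the operator bounds, and the theorem follows.
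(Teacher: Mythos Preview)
Your outline follows the paper's strategy essentially step for step: an abstract DMS--type decay estimate for $\mathscr L=\mathscr L_0+\mathscr L_1$ with $\mathscr L_0$ antisymmetric and $\mathscr L_1$ merely dissipative, then verification of $\pi\mathscr L_0\pi=0$, micro- and macroscopic coercivity via the two Poincar\'e inequalities, and boundedness of the auxiliary operators through the identification of $-\pi\mathscr L_0^2\pi$ with $c^\star\mathscr L_{OD}$ together with elliptic regularity for the associated Poisson equation. Two inaccuracies should be fixed before this becomes a proof.

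First, in your definition of $\mathcal A$ the factors are transposed: one needs $\mathcal A=(\lambda I+(\mathscr L_0\pi)^*\mathscr L_0\pi)^{-1}(\mathscr L_0\pi)^*$, so that $\pi\mathcal A=\mathcal A$ and so that $(\mathscr L_0\pi)^*\mathscr L_0\pi=-\pi\mathscr L_0^2\pi$, restricted to functions of $x$, becomes $-c^\star\mathscr L_{OD}$. Your operator $(\pi\mathscr L_0)^*(\pi\mathscr L_0)=-\mathscr L_0\pi\mathscr L_0$ does not map into $H_0$, and the subsequent identification with $\mathscr L_{OD}$ collapses. Second, and more substantively, the energy form that actually emerges from $-\<\mathscr L_1 f,f\>_{2}$ is \emph{not} the doubly weighted form you wrote. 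A careful computation of $\mu_2\big(f(\mathscr L_1+\mathscr L_1^*)f\big)$ --- this is precisely where the non-symmetry of $\mathscr L_1$ first bites --- yields, up to a constant,
\[
\iint_{\R^d\times\R^d}\frac{(f(v)-f(\bar v))^2}{|v-\bar v|^{d+\alpha}}\,\d v\,\mu_2(\d\bar v),
\]
with one \emph{Lebesgue} factor and one copy of $\mu_2$, not $\e^{-\Phi(v)}\e^{-\Phi(w)}\,\d v\,\d w$. It is for this asymmetric stable-like form that the Poincar\'e inequality of \cite{WW} under $({\bf A}_{\Phi,4})$ is available, and deriving it is a genuine step that your outline skips.
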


As a direct consequence of Theorem \ref{thm1}, we have the following statement.

\begin{corollary}\label{cor}
Assume  that $($${\bf A}_U$$)$ and    $\Phi(v)=\frac{1}{2}(d+\beta)\log(1+|v|^2)$ for $\beta\in[\alpha,2\alpha) $ with
 $d>2-\alpha$.
Then,
the process $(X_t,V_t)_{t\ge0}$ solving \eqref{E1} is $L^2$-exponentially ergodic.
\end{corollary}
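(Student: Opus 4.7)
The strategy is to apply Theorem \ref{thm1}. Since Assumption $({\bf A}_U)$ is assumed directly and $d>2-\alpha$ is part of the hypotheses, all that remains is to verify that the explicit choice $\Phi(v)=\frac{1}{2}(d+\beta)\log(1+|v|^2)$ with $\beta\in[\alpha,2\alpha)$ satisfies each of the four sub-assumptions in $({\bf A}_\Phi)$. Thus the proof is a verification-type argument with no probabilistic content beyond the invocation of the main theorem. The role of the parameter range $\beta\in[\alpha,2\alpha)$ will become apparent: the lower bound $\beta\geq\alpha$ feeds $({\bf A}_{\Phi,4})$, while the strict upper bound $\beta<2\alpha$ is exactly what is needed for the integrability in $({\bf A}_{\Phi,3})$.

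First I would dispatch $({\bf A}_{\Phi,1})$: write $\Phi(v)=\psi(|v|^2)$ with $\psi(s)=\frac{1}{2}(d+\beta)\log(1+s)\in C^\infty(\R_+;\R_+)$, note that $\psi$ is non-decreasing, and observe that $\e^{-\Phi(v)}=(1+|v|^2)^{-(d+\beta)/2}$ is integrable because $d+\beta>d$. Next, for $({\bf A}_{\Phi,2})$, I would compute
\begin{equation*}
\nn\Phi(v)=\frac{(d+\beta)\,v}{1+|v|^2},\qquad \nn^2\Phi(v)=\frac{d+\beta}{1+|v|^2}\,I-\frac{2(d+\beta)\,v\otimes v}{(1+|v|^2)^2},
\end{equation*}
so that $\|\nn\Phi\|_\infty<\infty$, $\|\nn^2\Phi(v)\|\,|v|\lesssim 1$ uniformly, and $\|\nn^3\Phi\|_\infty<\infty$ by a similar computation showing that $\nn^3\Phi$ is a sum of rational expressions decaying at least like $|v|^{-3}$. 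The doubling-type bound $\sup_v|\Phi(v)-\Phi(v/2)|\leq\frac{1}{2}(d+\beta)\log 4<\infty$ is immediate from the logarithmic structure. The local stability condition $\sup_{u\in B_1(v)}\|\nn^i\Phi(u)\|\leq c^*\|\nn^i\Phi(v)\|$ for $i=1,2,3$ and $|v|\geq v^*$ follows because each $\|\nn^i\Phi(v)\|$ is comparable to a negative power of $1+|v|^2$, which varies slowly across a unit ball when $|v|$ is large.

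For $({\bf A}_{\Phi,3})$, I would estimate
\begin{equation*}
\int_{\R^d}\frac{\e^{\Phi(v)}}{(1+|v|)^{2(d+\alpha)}}\,\d v=\int_{\R^d}\frac{(1+|v|^2)^{(d+\beta)/2}}{(1+|v|)^{2(d+\alpha)}}\,\d v\lesssim\int_{\R^d}\frac{\d v}{(1+|v|)^{2(d+\alpha)-(d+\beta)}},
\end{equation*}
which converges iff $2(d+\alpha)-(d+\beta)>d$, i.e.\ iff $\beta<2\alpha$; this is precisely the upper endpoint condition on $\beta$. For $({\bf A}_{\Phi,4})$,
\begin{equation*}
\liminf_{|v|\to\infty}\frac{\e^{\Phi(v)}}{|v|^{d+\alpha}}=\liminf_{|v|\to\infty}\frac{(1+|v|^2)^{(d+\beta)/2}}{|v|^{d+\alpha}}>0
\end{equation*}
exactly when $d+\beta\geq d+\alpha$, i.e.\ when $\beta\geq\alpha$, which is the lower endpoint condition.

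All four conditions being verified, Theorem \ref{thm1} applies and yields the $L^2$-exponential ergodicity claimed. There is no real obstacle here: the only subtle point is recognising that the admissible range $\beta\in[\alpha,2\alpha)$ is dictated precisely by the tension between the integrability requirement in $({\bf A}_{\Phi,3})$ (pushing $\beta$ down) and the lower tail requirement in $({\bf A}_{\Phi,4})$ (pushing $\beta$ up). The remaining regularity/smoothness checks are routine calculus on the explicit logarithmic profile.
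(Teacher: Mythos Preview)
Your proposal is correct and follows essentially the same approach as the paper: both verify the four sub-assumptions $({\bf A}_{\Phi,1})$--$({\bf A}_{\Phi,4})$ for the explicit logarithmic profile and then invoke Theorem~\ref{thm1}, with the same gradient/Hessian computation and the same identification of $\beta\geq\alpha$ and $\beta<2\alpha$ as the conditions governing $({\bf A}_{\Phi,4})$ and $({\bf A}_{\Phi,3})$ respectively. If anything, you are slightly more explicit than the paper in checking the local stability condition on $\nn^i\Phi$ over unit balls.
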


\begin{remark} Roughly speaking, the process $(X_t,V_t)_{t\ge0}$ solving \eqref{E0-} is $L^2$-exponentially ergodic provided that both the measures $\mu_1(\d x):= \frac{1}{C_U}\e^{-U(x)}\,\d x$ and $\mu_2(\d v): =\frac{1}{C_\Phi}\e^{-\Phi(v)}\,\d v$ satisfy the Poincar\'e inequalities; see e.g. \cite{DMS,GW,Villani}. Hence, in this sense,   the Assumption $($${\bf A}_U$$)$ is reasonable since $($${\bf A}_{U,2}$$)$ is a (mild) sufficient condition to ensure  that $\mu_1$ fulfills the Poincar\'e inequality. On the other hand, $({\bf A}_{\Phi,4})$ is a sufficient condition so that $\mu_2$ satisfies the Poincar\'e
 inequality as well. However, as mentioned above, there are a few of essential differences between \eqref{E0-} and \eqref{E1}. In particular, from the viewpoint of  infinitesimal generators, the counterpart  corresponding to \eqref{E1} (see \eqref{W2}) cannot be written into a proper form as that for \eqref{E0-},
  where  the associated infinitesimal generator is equal to the $L^2(\mu)$-antisymmetric part plus $L^2(\mu)$-symmetric part. Thus, to apply  efficiently  the dual version of the DMS framework developed here for the system \eqref{E1}, we need to take care of the $L^2$-bound for or some $L^2$-estimate of the error term for the dual operator of $\mathscr{L}_1$ in \eqref{E1}. In particular, the additional assumptions $({\bf A}_{\Phi,2})$ and $({\bf A}_{\Phi,3})$ are necessary. This in turn requires that $\beta<2\alpha$ in Corollary \ref{cor},
 which  leads to an immediate     consequence   that our main result Theorem \ref{thm1} does not work when $\mu_2$ is of (sub)-exponentially decay. The reader can refer to Remark \ref{Rem:add1} for more details on this point.
We also want to mention that such kind  conditions are  imposed commonly in
investigating the analysis properties of fractional Laplacian
 operator;   see, for example,  \cite{BDL} for the fractional hypocoercivity of kinetic equations without confinements.
\end{remark}

\ \

The remainder content of this work is organized as follows. In Section \ref{sec2}, we establish a general DMS framework, where one part of the densely defined linear operator involved is antisymmetric  while the other part need not to be symmetric. As an application, we apply the  DMS framework developed
to complete the proof of Theorem \ref{thm1}. This   will be addressed in the Section \ref{sec3}. Since the proof of Theorem \ref{thm1} is lengthy,
a series of Lemmas and propositions are prepared in  the Section \ref{sec3} so that the paper is much more  readable.

\section{A
General DMS framework}\label{sec2}
To encompass the non-local kinetic Fokker-Planck operator $\mathscr L$ defined by \eqref{W2}, in this section we aim to develop a general DMS framework. For this purpose,
some warm-up work is required to be carried out in advance.
Let $(\mathscr L,\mathcal D(\mathscr L))$ be a densely defined linear operator generating a strongly continuous contraction semigroup $(P_t)_{t\ge0}$
on a separable Hilbert space
$(H,\<\cdot,\cdot\>_H, \|\cdot\|_H)$. Assume that $\mathfrak{D} $ is a core of  $(\mathscr L,\mathcal D(\mathscr L))$,  and that $\mathscr L$  can be decomposed into:
\begin{equation*}
\mathscr L=\mathscr L_0+\mathscr L_1\quad\mbox{ on } \mathfrak{D},
\end{equation*}
where  the   linear operator $(\mathscr L_0, \mathfrak{D} )$ is   antisymmetric in $H$. Since $(\mathscr L_0, \mathfrak{D} )$ is a densely defined antisymmetric operator on $H$, $(\mathscr L_0, \mathfrak{D} )$ is a closable operator (see \cite[Lemma 26]{ADNR} or \cite[Theorem 5.1.5, p.194]{Pedersen}) with
 the  closure  $(\mathscr L_0,\mathcal D(\mathscr L_0))$. On the other hand, since the semigroup $(P_t)_{t\ge0}$ is contractive, the generator $(\mathscr L,\mathcal D(\mathscr L))$ is negative definite on $H$, i.e., $\langle -\mathscr L f,f\rangle\ge0$ for all $f\in\mathfrak D.$ Hence, the antisymmetric property of $(\mathscr L_0, \mathfrak{D} )$ yields that $\<-\mathscr L_1 f,f\>\ge0$ for all $f\in\mathfrak D.$

  Let $H_0$ be a closed subspace of $H$, so that $H$ can be written as a direct sum of $H_0$ and its orthogonal complement
$H_0^{\perp}$; see \cite[Theorem 3.3-4, p.146]{Kre}. Thus,   the orthogonal projection operator $\pi: H\rightarrow H_0$ is well defined.

\smallskip

Below, we shall assume that
\begin{enumerate}\it
\item[$($$ {\bf  H}_1$$)$]  $\mathfrak{D}   \subset\mathcal D(\mathscr L)\cap\mathcal D(\mathscr L^*)$, and $H_0\subset\{f\in \mathcal D(\mathscr L_1):\mathscr L_1f=0\} \cap\{f\in \mathcal D(\mathscr L_1^*):\mathscr L_1^*f=0\}$, where $\mathscr L^*$ and $\mathscr L_1^*$ stand for the
  adjoint operators of $\mathscr L$ and $\mathscr L_1 $ on $H,$ respectively\,$;$

\item[$($$ {\bf H}_2$$)$] $\pi\mathfrak{D}\subset\mathcal D(\mathscr L_0)$ and $\pi\mathscr L_0\pi f=0$ for all $f\in \mathfrak D; $

\item[$($$ {\bf H}_3$$)$] there exist  constants $\alpha_1,\alpha_2>0$ such that
\begin{equation}\label{EE1}
\|(I-\pi)f\|_H^2\le \alpha_1\<-\mathscr L_1 f,f\>_H,\quad f\in\mathfrak D,
\end{equation}
and
\begin{equation}\label{EE2}
\| \pi f\|_H^2\le \alpha_2\|\mathscr L_0\pi f\|_H^2,\quad f\in\mathcal D(\mathscr L_0\pi).
\end{equation}
\end{enumerate}

\smallskip

From ($ {\bf  H}_1$) and ($ {\bf  H}_2$), $H_0\subset\mathcal D(\mathscr L_1)$  and $\pi\mathfrak{D}\subset\mathcal D(\mathscr L_0)$. Thus,  for all $u\in\mathfrak D$, $\pi u\in\mathcal D(\mathscr L)=\mathcal D(\mathscr L_0)\cap\mathcal D(\mathscr L_1)$, and so $(I-\pi) u\in \mathcal D(\mathscr L)$. Consequently, the mapping $\mathfrak D\ni u\mapsto \mathscr L(I-\pi)u$ is well defined (see ($ {\bf  H}_4$) below).

Due to the fact that  $( \mathscr L_0, \mathcal D(\mathscr L_0))$ is a densely defined antisymmetric operator, in addition to
 $\pi\mathcal D(\mathscr L_0)\subset\mathcal D(\mathscr L_0)$ since  $\pi\mathfrak{D}\subset\mathcal D(\mathscr L_0)$ by ($ {\bf H_2}$) and
 $(\mathscr L_0,\mathcal D(\mathscr L_0))$ is a closure of $(\mathscr L_0, \mathfrak{D} )$,
 $(\mathscr L_0\pi, \mathcal D(\mathscr L_0))$ is a closable operator (see \cite[Lemma 26]{ADNR}) with  the closure  $(\mathscr L_0 \pi, \mathcal D(\mathscr L_0\pi))$. Because
$(\mathscr L_0\pi)^*=\pi\mathscr L_0^*=-\pi\mathscr L_0$ on $\mathfrak D$,    $(\mathscr L_0\pi)^*$ is a densely defined linear operator. This,
together with
$\mathscr L_0\pi=((\mathscr L_0\pi)^*)^*$ on $\mathfrak D$ and \cite[Theorem 5.1.5, p. 194]{Pedersen}, implies that
 $(\mathscr L_0\pi, \mathcal D(\mathscr L_0\pi))$ is a densely defined closed operator.
Next, define $G =(\mathscr L_0\pi)^* \mathscr L_0\pi $. Then,  $(G,\mathcal D(G))$ is self-adjoint and $\mathcal D(G)$ is a core of $\mathscr L_0\pi$; and moreover, for $\lambda>0$, $\lambda I+G$ is bijective from $\mathcal D(G)$ to $H$ and the inverse operator $(\lambda I+G)^{-1}$ is a self-adjoint operator with the operator norm $\|(\lambda I+G)^{-1}\|\le 1/\lambda$; see, \cite[Theorem 5.1.9 (i) and (ii), p.195]{Pedersen}.
Subsequently, the operator
\begin{equation}\label{E*}
B_\lambda:=(\lambda I+G)^{-1}(\mathscr L_0\pi)^*,\qquad \mathcal D(B_\lambda):=\mathcal D((\mathscr L_0\pi)^*)
=\mathcal D(\mathscr L_0)
\end{equation}
is well defined. Recalling from \cite[Theorem 5.1.5, p.194]{Pedersen} again that $(\mathscr L_0\pi)^*$ is a densely defined closed operator,  we deduce from \cite[Theorem 5.1.9 (iii), p.195] {Pedersen} that
\begin{equation*}
\bar B_\lambda=(\mathscr L_0\pi)^*(\lambda I+G)^{-1},\quad \mathcal D(\bar B_\lambda)=H,\quad \|\bar B_\lambda\|\le \lambda^{-{1}/{2}},
\end{equation*} where $(\bar B_\lambda ,\mathcal D(\bar B_\lambda ))$ is the closure of $(B_\lambda , \mathcal D(B_\lambda ))$.
This, along with the fact that $\pi$ is a projection operator (so $\pi^*=\pi$ and $\pi^2=\pi$) on $H$, implies $\pi \bar B_\lambda =\bar B_\lambda $.
The readers are referred to \cite[Section 2]{GW} or \cite[Appendix B]{ADNR} for related discussions on the operator $B_\lambda$ and its properties.

\smallskip

On the basis of  the preliminary materials concerned with the linear operator $B_\lambda$, we further suppose that
\begin{enumerate}\it
\item[$($$ {\bf  H}_4$$)$] $\mathfrak D\subset\mathcal D(G)$, and there exists a constant $\alpha_3:=\alpha_3(\lambda)>0$ such that for all $f\in\mathfrak D,$
 \begin{equation*}
 |\<B_\lambda \mathscr L(I-\pi)f,f\>_H|\le \alpha_3\|\pi f\|_H\|(I-\pi) f\|_H.
 \end{equation*}
\end{enumerate}

The main result in this section is stated as follows.

\begin{theorem}\label{thm}
Assume that $(  {\bf  H_1})$--$(  {\bf  H_4})$ hold true. Then, for all $f\in H$, $t>0$ and $\lambda>0$,
\begin{equation}\label{E***}
\|P_tf\|_H^2\le C\e^{-\lambda_0 t}\| f\|_H^2,
\end{equation} where
\begin{equation}
\begin{split}
\label{E**}\lambda_0:&=\frac{\vv_0}{2(1+\vv_0\lambda^{-1/2})(1+\lambda\alpha_2)},\quad  C:=\frac{\lambda^{1/2}+\vv_0 }{\lambda^{1/2}-\vv_0 },\\
 \vv_0:&= \frac{1}{2}\bigg( \lambda^{1/2} \wedge\frac{1+\lambda\alpha_2}{\alpha_1}\wedge  \frac{1}{  \alpha_1(1+\lambda\alpha_2)\alpha_3^2} \bigg)
\end{split}
\end{equation} with $\alpha_1,\alpha_2>0$ and $\alpha_3>0$  given in $($$ {\bf  H}_3$$)$ and $($$ {\bf  H}_4$$)$, respectively.
\end{theorem}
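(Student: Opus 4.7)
\medskip

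\noindent\textbf{Proof plan for Theorem \ref{thm}.} The plan is to adapt the dual DMS scheme of \cite{Grothaus,GW} via an auxiliary Lyapunov functional, taking care that $\mathscr{L}_1$ is not assumed symmetric. Introduce the modified entropy
\[
\mathcal H_\varepsilon[f]:=\tfrac12\|f\|_H^2+\varepsilon\langle \bar B_\lambda f,f\rangle_H,\qquad f\in H,\ \varepsilon>0.
\]
Because $\|\bar B_\lambda\|\le \lambda^{-1/2}$, whenever $\varepsilon<\lambda^{1/2}$ one gets the equivalence
\[
\tfrac12(1-\varepsilon\lambda^{-1/2})\|f\|_H^2\le \mathcal H_\varepsilon[f]\le \tfrac12(1+\varepsilon\lambda^{-1/2})\|f\|_H^2,
\]
which, together with a differential inequality $\tfrac{d}{dt}\mathcal H_\varepsilon[P_tf]\le -K_\varepsilon\|P_tf\|_H^2$ and Gronwall's lemma, will produce \eqref{E***} with the constants $C$ and $\lambda_0$ of \eqref{E**}. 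The candidate choice $K_\varepsilon=\frac{\varepsilon}{4(1+\lambda\alpha_2)}$ is exactly what is needed to match $\lambda_0$.

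\medskip

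\noindent The bulk of the work is to establish the differential inequality for $f\in\mathfrak D$ (then extend by density, using that $\mathfrak D$ is a core). Differentiating gives
\[
\tfrac{d}{dt}\mathcal H_\varepsilon[P_tf]\big|_{t=0}=\langle \mathscr L f,f\rangle_H+\varepsilon\langle B_\lambda \mathscr L f,f\rangle_H+\varepsilon\langle \bar B_\lambda f,\mathscr L f\rangle_H.
\]
The first term is handled by the antisymmetry of $\mathscr L_0$ combined with the microscopic coercivity in \eqref{EE1}:
\[
\langle \mathscr L f,f\rangle_H=\langle \mathscr L_1 f,f\rangle_H\le -\alpha_1^{-1}\|(I-\pi)f\|_H^2.
\]
For the second term split $f=\pi f+(I-\pi)f$. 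By $(\mathbf H_1)$ we have $\mathscr L_1\pi f=0$, so the $\pi$-part reduces to $\langle B_\lambda \mathscr L_0\pi f,f\rangle_H=\langle(\lambda I+G)^{-1}G\pi f,\pi f\rangle_H$ (using $\pi\bar B_\lambda=\bar B_\lambda$). The macroscopic coercivity \eqref{EE2} is equivalent to $\mathrm{spec}(G)\cap H_0\subset[\alpha_2^{-1},\infty)$, and spectral calculus on $\frac{s}{\lambda+s}$ then yields
\[
\langle (\lambda I+G)^{-1}G\pi f,\pi f\rangle_H\ge \frac{1}{1+\lambda\alpha_2}\|\pi f\|_H^2.
\]
The $(I-\pi)$-part is controlled directly by $(\mathbf H_4)$: $|\langle B_\lambda\mathscr L(I-\pi)f,f\rangle_H|\le \alpha_3\|\pi f\|_H\|(I-\pi)f\|_H$.

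\medskip

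\noindent The delicate new ingredient is the third term $\langle \bar B_\lambda f,\mathscr L f\rangle_H$: in the symmetric setting of \cite{Grothaus,GW} it is simply the conjugate of the second and one takes twice the real part. Here $\mathscr L_1$ fails to be symmetric, so we argue separately. The saving fact is that $\bar B_\lambda f\in H_0$ and hence $\mathscr L_1^*\bar B_\lambda f=0$ by the second half of $(\mathbf H_1)$, which allows the $\mathscr L_1$-contribution to vanish and leaves only $\langle \bar B_\lambda f,\mathscr L_0 f\rangle_H$; splitting $f=\pi f+(I-\pi)f$ once more, the $\pi$-piece contributes a term of the same structure as in step two (again via $G$-calculus), while the $(I-\pi)$-piece is absorbed by a second application of $(\mathbf H_4)$ after passing to the adjoint. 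Collecting everything, one arrives at
\[
-\tfrac{d}{dt}\mathcal H_\varepsilon[f_t]\ge \bigl(\alpha_1^{-1}-c_1\varepsilon\bigr)\|(I-\pi)f_t\|_H^2+\Bigl(\tfrac{\varepsilon}{1+\lambda\alpha_2}-c_2\varepsilon^2\Bigr)\|\pi f_t\|_H^2 - 2\varepsilon\alpha_3\|\pi f_t\|_H\|(I-\pi)f_t\|_H,
\]
and a weighted Young inequality eliminates the cross term. The three upper bounds on $\varepsilon_0$ in \eqref{E**} are precisely: (i) $\varepsilon_0<\lambda^{1/2}/2$ for norm equivalence, (ii) $\varepsilon_0< (1+\lambda\alpha_2)/(2\alpha_1)$ to keep the $(I-\pi)$-coefficient positive after absorbing the cross term, and (iii) $\varepsilon_0< 1/(2\alpha_1(1+\lambda\alpha_2)\alpha_3^2)$ to keep the $\pi$-coefficient at least $\tfrac{\varepsilon_0}{2(1+\lambda\alpha_2)}$ after Young. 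Combining with norm equivalence yields $\tfrac{d}{dt}\mathcal H_\varepsilon[f_t]\le -\lambda_0\mathcal H_\varepsilon[f_t]$ with the stated $\lambda_0$, and Gronwall finishes the proof.

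\medskip

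\noindent\emph{Main obstacle.} The only genuinely new difficulty beyond \cite{Grothaus,GW} is the third term $\langle \bar B_\lambda f,\mathscr Lf\rangle_H$: without symmetry of $\mathscr L_1$ one cannot fold it into the second term, and one must crucially exploit $\mathscr L_1^*\bar B_\lambda f=0$ on $H_0$ (i.e.\ the $\mathscr L_1^*$-part of $(\mathbf H_1)$, which is the new assumption relative to the earlier frameworks) together with a careful re-application of $(\mathbf H_4)$ to the $\mathscr L_0$-remainder. Checking domain issues when differentiating $t\mapsto \mathcal H_\varepsilon[P_tf]$ outside $\mathfrak D$ is routine but must be done via a density argument using the contractivity of $P_t$.
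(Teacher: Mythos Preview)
Your overall plan coincides with the paper's: the same modified entropy functional, the same norm equivalence, and the same split of $\tfrac{d}{dt}\mathcal H_\varepsilon$ into the three terms, with the first handled by microscopic coercivity, the $B_\lambda\mathscr L_0\pi$-part of the second by macroscopic coercivity (the paper invokes \cite[Lemma~2.3]{GW} rather than direct spectral calculus, but this is the same thing), and the $B_\lambda\mathscr L(I-\pi)$-part by $(\mathbf H_4)$. You have also correctly isolated the new ingredient: the third term $\langle\bar B_\lambda f,\mathscr L f\rangle_H$ cannot be folded into the second when $\mathscr L_1$ is not symmetric, and the rescue is $\mathscr L_1^*\bar B_\lambda f=0$ coming from the second half of $(\mathbf H_1)$.

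Where your plan diverges from the paper --- and where it goes wrong --- is in what you do \emph{after} killing the $\mathscr L_1$-contribution in the third term. You propose to split $\langle\bar B_\lambda f,\mathscr L_0 f\rangle_H$ again into a $\pi$-piece handled by ``$G$-calculus'' and an $(I-\pi)$-piece handled by ``a second application of $(\mathbf H_4)$ after passing to the adjoint''. Neither step is right. First, the $\pi$-piece actually vanishes: since $\pi\bar B_\lambda=\bar B_\lambda$ one has $\langle\bar B_\lambda f,\mathscr L_0\pi f\rangle_H=\langle(\mathscr L_0\pi)^*\bar B_\lambda f,f\rangle_H=-\langle\pi\mathscr L_0\pi\bar B_\lambda f,f\rangle_H=0$ by $(\mathbf H_2)$; it does not give a second copy of the macroscopic dissipation. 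Second, $(\mathbf H_4)$ controls $\langle B_\lambda\mathscr L(I-\pi)f,f\rangle_H$, which is not the quantity $\langle\bar B_\lambda f,\mathscr L_0(I-\pi)f\rangle_H$ appearing here, and no adjoint manipulation turns one into the other. The paper's route is simpler and avoids the split altogether: using $\mathscr L_0^*=-\mathscr L_0$ one writes $\langle\bar B_\lambda f,\mathscr L_0 f\rangle_H=-\langle\mathscr L_0\bar B_\lambda f,f\rangle_H$ and then applies the operator bound $\|\mathscr L_0\bar B_\lambda u\|_H\le\|(I-\pi)u\|_H$ (this is \cite[Lemma~1]{DMS} or \cite[(2.8)]{GW}), yielding $|\langle\bar B_\lambda f,\mathscr L f\rangle_H|\le\|(I-\pi)f\|_H\|f\|_H$. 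This is why the paper's differential inequality carries a cross term $\varepsilon_0\|(I-\pi)f\|_H\|f\|_H$ (rather than your $2\varepsilon\alpha_3\|\pi f\|_H\|(I-\pi)f\|_H$) and an $\tfrac12\alpha_1\varepsilon_0^2\|f\|_H^2$ after Young; the three upper bounds on $\varepsilon_0$ in \eqref{E**} then match up exactly. Your scheme is salvageable once you replace the erroneous ``second $(\mathbf H_4)$'' by this $\|\mathscr L_0\bar B_\lambda\|$-bound.
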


\begin{proof}
Since $(\mathscr L,\mathcal D(\mathscr L))$ is a densely defined linear operator and the associated semigroup $(P_t)_{t\ge0}$ is contractive,
it is sufficient to show that \eqref{E***} holds true for any $f\in  \mathcal D(\mathscr L)$.
Below, we define the modified entropy functional (see \cite[p. 3812]{DMS})
\begin{equation*}
I_\lambda  (f)=\frac{1}{2}\|f\|_H^2+\vv_0\<B_\lambda f,f\>_H,\quad f\in H,
\end{equation*}
where the linear operator $B_\lambda $ was defined in \eqref{E*} and $\vv_0$ was given in \eqref{E**}. Recall the basic fact that
  $f_t:=P_tf\in\mathcal D(\mathscr L)$ for  any $f\in  \mathcal D(\mathscr L)$, and notice that $ 1- \vv_0\lambda^{-1/2}\ge {1}/{2} $ in terms of the definition of $\vv_0.$
Provided that
\begin{equation}\label{E10}
\frac{\d}{\d t}I_\lambda   (f_t)
\le   -\frac{\vv_0}{2(1+\vv_0\lambda^{-1/2})(1+\lambda \alpha_2)}I_\lambda  (f_t),
\end{equation}
then \eqref{E***} with $f\in  \mathcal D(\mathscr L)$ is available  by applying Gronwall's inequality and noting
\begin{equation}\label{E13}
\frac{1}{2}\Big(1-\frac{\vv_0}{\lambda^{1/2}}\Big)\|u\|_H^2\le I_\lambda(u)\le \frac{1}{2}\Big(1+\frac{\vv_0}{\lambda^{1/2}}\Big)\|u\|_H^2,
\end{equation}
which is valid due to $$\|B_\lambda u\|_H\le \frac{1}{2\lambda^{1/2}}\|(I-\pi)u\|_H\le \frac{1}{2\lambda^{1/2}}\|u\|_H$$ for   $u\in H$ (see  \cite[Lemma 1]{DMS} or \cite[(2.7) in Lemma 2.2]{GW}   thanks to the definition of the operator $B$ and $(  {\bf  H_2})$). Therefore, it remains to prove \eqref{E10}.

By invoking the fact that $\frac{\d}{\d t}f_t=\mathscr Lf_t$ for all
$f\in\mathcal D(\mathscr L)$, we deduce that
\begin{equation}\label{EE0}
\frac{\d}{\d t}I_\lambda  (f_t)
 =\<\mathscr L f_t,f_t\>_H +\vv_0\big(\<B_\lambda \mathscr L f_t,f_t\>_H+\<B_\lambda  f_t,\mathscr Lf_t\>_H\big).
\end{equation}
Since $\mathfrak D$ is a core of $(\mathscr L,\mathcal D(\mathscr L))$,   for each fixed $t\ge0,$ there is a sequence of functions $(f_t^n)_{n\ge1}\subset\mathfrak D$ satisfying
\begin{equation}\label{EE5-}
\lim_{n\to\8}\big(\|f_t-f_t^n\|_H+\|\mathscr Lf_t-\mathscr Lf_t^n\|_H\big)=0.
\end{equation}
Concerning  the sequence $(f_t^n)_{n\ge1}\subset\mathfrak D$ above,
it is easy to see from \eqref{EE0} that
\begin{equation}\label{EE5}
\frac{\d}{\d t}I_\lambda   (f_t)=\<\mathscr L f_t^n,f_t^n\>_H +\vv_0\big( \<B_\lambda  f_t^n,\mathscr Lf_t^n\>_H+\<B_\lambda \mathscr L f_t^n,f_t^n\>_H\big)+R_t^{n,\lambda},
\end{equation}
where the remainder
\begin{equation*}
\begin{split}
R_t^{n,\lambda}:&= \<\mathscr L (f_t-f_t^n),f_t\>_H+\<\mathscr L  f_t^n ,f_t-f_t^n\>_H +\vv_0\<B_\lambda  (f_t-f_t^n),\mathscr Lf_t\>_H\\
&\quad+\vv_0\<B_\lambda   f_t^n ,\mathscr L(f_t-f_t^n)\>_H+ \vv_0\<B_\lambda \mathscr L f_t^n,f_t-f_t^n\>_H+\vv_0\<B_\lambda \mathscr L (f_t-f_t^n),f_t\>_H.
\end{split}
\end{equation*}

Taking  $\mathscr L_0^*=-\mathscr L_0$ (so $ \<\mathscr L_0 u,u\>_H =  0$ for $u\in\mathcal D(\mathscr L_0)$) and \eqref{EE1} into consideration implies that
 \begin{equation}\label{EE4}
\<\mathscr L f_t^n,f_t^n\>_H=-\<-\mathscr L_1 f_t^n,f_t^n\>_H\le -\frac{1}{\alpha_1}\|(I-\pi)f_t^n\|_H^2.
\end{equation}
Next, by  using $\mathscr L_0^*=-\mathscr L_0$ again, along with $\pi B_\lambda =B_\lambda $ and  $\mathscr L_1^*\pi u=0$ for all $u\in H$ due to ($ {\bf  H}_1$), it follows that for all $u\in\mathfrak D$,
\begin{equation*}
\<B_\lambda  u,\mathscr Lu\>_H
= \< \mathscr L_0^* B_\lambda  u,u\>_H+\<\mathscr L_1^*\pi B_\lambda  u,u\>_H=-\< \mathscr L_0 B_\lambda  u,u\>_H.
\end{equation*}
This, together with        $\|\mathscr L_0 B_\lambda u\|_H\le \|(I-\pi)u\|_H$ for all $u\in\mathfrak D$ (see \cite[Lemma 1]{DMS} or \cite[(2.8) in Lemma 2.2]{GW}), leads to
\begin{equation}\label{EE6}
|\<B_\lambda  f_t^n,\mathscr Lf_t^n\>_H|
\le \|(I-\pi)f_t^n\|_H\|f_t^n\|_H.
\end{equation}

Furthermore, according to $(  {\bf  H_1})$ and $(  {\bf  H_2})$, $\mathscr L_1\pi u=0$ and $\pi u\subset\mathcal D(\mathscr L_0)$ for all $u\in \mathfrak D$. Thus, we derive from ($ {\bf  H}_4$) that for all $u\in \mathfrak D$,
\begin{equation*}
\begin{split}
\<B_\lambda \mathscr L u,u\>_H
&=\<B_\lambda \mathscr L_0 \pi u,u\>_H+\<B_\lambda \mathscr L_1 \pi u,u\>_H+\<B_\lambda \mathscr L (I-\pi) u,u\>_H \\
&=\<B_\lambda \mathscr L_0 \pi u,u\>_H+ \<B_\lambda \mathscr L (I-\pi) u,u\>_H\\
&\le \<B_\lambda \mathscr L_0 \pi u,u\>_H+\alpha_3\|\pi u\|_H\|(I-\pi) u\|_H.
\end{split}
\end{equation*}
As a result, we   have
\begin{equation}\label{EE3}
\<B_\lambda \mathscr L f_t^n,f_t^n\>_H\le  \<B_\lambda \mathscr L_0 \pi f_t^n,f_t^n\>_H+\alpha_3\|\pi f_t^n\|_H\|(I-\pi) f_t^n\|_H.
\end{equation}
On the other hand, applying \cite[Lemma 2.3]{GW} with $A_0=
\mathscr L_0\pi$, $\alpha(r)\equiv\alpha_2$, $\Psi_0(r)\equiv0$ and $\nu(\d s)=\e^{-\lambda s}\d s$,  and taking advantage of \eqref{EE2}, $\pi^2=\pi$ and $\pi B_\lambda =B_\lambda $  yield that
\begin{equation*}
\<B_\lambda \mathscr L_0 \pi f_t^n,f_t^n\>_H=\<B_\lambda \mathscr L_0 \pi (\pi f_t^n),\pi f_t^n\>_H\le -\frac{1}{1+\lambda\alpha_2}\|\pi f_t^n\|_H^2.
\end{equation*}
Thus, plugging  this back into \eqref{EE3} gives us
\begin{equation} \label{EE7}
\<B_\lambda \mathscr L f_t^n,f_t^n\>_H\le  -\frac{1}{1+\lambda\alpha_2}\|\pi f_t^n\|_H^2+\alpha_3\|\pi f_t^n\|_H\|(I-\pi) f_t^n\|_H.
\end{equation}

Now, combining \eqref{EE4} with \eqref{EE6} and \eqref{EE7} enables us to obtain that
\begin{equation}\label{E12}
 \begin{split}
\frac{\d}{\d t}I_\lambda   (f_t)&\le-\frac{1}{\alpha_1}\|(I-\pi)f_t^n\|_H^2  -\frac{\vv_0}{1+\lambda\alpha_2}\|\pi f_t^n\|_H^2  \\
&\quad+\vv_0\big( \|(I-\pi)f_t^n\|_H\|f_t^n\|_H  +\alpha_3\|\pi f_t^n\|_H\|(I-\pi) f_t^n\|_H\big)+R_t^{n,\lambda}.
 \end{split}
\end{equation}
Since $B_\lambda $ is a bounded linear operator with the operator norm $\|B_\lambda \|\le \lambda^{-{1}/{2}}$, we deduce  from \eqref{EE5-} that  $\lim_{n\to\8}R_t^{ n,\lambda}=0$. Hence,  by letting $n\to\8$ in \eqref{E12}, \eqref{EE5-}, \eqref{EE5}  and the inequality: $ 2ab\le  \delta a^2+b^2/\delta $ for all $a,b\ge0$ and $\delta>0$ imply  that
\begin{equation}\label{EE}
 \begin{split}
\frac{\d}{\d t}I_\lambda   (f_t)&\le -\frac{1}{\alpha_1}\|(I-\pi)f_t\|_H^2  -\frac{\vv_0}{1+\lambda\alpha_2}\|\pi f_t\|_H^2  \\
&\quad+\vv_0\big( \|(I-\pi)f_t\|_H\|f_t\|_H  +\alpha_3\|\pi f_t\|_H\|(I-\pi) f_t\|_H\big)\\
&\le  -\frac{1}{ 2}\Big(\frac{1}{\alpha_1}-\vv_0\alpha_3^2(1+\lambda\alpha_2)\Big)\|(I-\pi)f_t\|_H^2-\frac{\vv_0}{2(1+\lambda\alpha_2)}\|\pi f_t\|_H^2+\frac{1}{2}\alpha_1\vv^2_0\|f_t\|_H^2.
 \end{split}
\end{equation}
According to the alternative of $\vv_0$ introduced in \eqref{E**},  we obtain that
\begin{equation*}
\frac{1}{2\alpha_1}-\vv_0\alpha_3^2(1+\lambda\alpha_2)\ge0,\quad \frac{\vv_0}{2(1+\lambda\alpha_2)}\le\frac{1}{4\alpha_1},\quad -\frac{\vv_0}{4(1+\lambda\alpha_2)} +\frac{1}{2}\alpha_1\vv^2_0\le0.
\end{equation*}
Consequently, by utilizing  the fact that $\|(I-\pi)f_t\|_H^2+\|\pi f_t\|_H^2=\|  f_t\|_H^2$, the estimate \eqref{EE} implies that
\begin{equation*}
\frac{\d}{\d t}I_\lambda   (f_t)
\le   -\frac{\vv_0}{4(1+\lambda\alpha_2)}\| f_t\|_H^2.
\end{equation*}
 Whence, \eqref{E10} follows by taking \eqref{E13} into consideration. The proof is therefore completed.
\end{proof}

Before ending this section, we make some remarks on the comparisons on
Theorem \ref{thm} and
 the DMS framework in \cite{DMS,Grothaus,GW}.
 \begin{remark}
 \begin{itemize}
 \item[{\rm (i)}] Recall that the densely defined linear operator $\mathscr L$ considered in \cite{DMS,Grothaus} has to be decomposed into the symmetric part and the antisymmetric part. Nevertheless,
 the linear operator $\mathscr L$ we focus on in this paper
is allowed to admit an antisymmetric part, but the remainder need not to be symmetric.

\item[{\rm (ii)}] In  \cite{DMS,Grothaus}, \eqref{EE1} and \eqref{EE2} in Assumption $({\bf H}_3)$ are called the microscopic coercivity and the macroscopic coercivity respectively, which are also referred to as Poincar\'{e} inequalities in \cite{GW}. Assumption ($ {\bf  H}_4$) is concerned with the boundedness of auxiliary operators.

  \item[{\rm (iii)}] Obviously, $({\bf H}_1)$ coincides with \cite[(H1)]{GW} when $\mathscr L_1$
is self-adjoint.   $({\bf H}_4)$ with $\lambda=1$ in the present paper is a little bit weaker than \cite[Assumption ({\bf H4})]{DMS}  and \cite[(H3)]{GW}. Moreover, the identity operator $I$ involved in the operator $B$ in \cite{DMS,GW} has been replaced by the operator $\lambda I$, which plays a tuneable role for our purpose.
 In particular, \cite[Assumption ({\bf H4})]{DMS} requires that, for any $f\in\mathcal D(\mathscr L)$, there exists a sequence of functions $(f_n)_{n\ge1}\subset\mathfrak D$ such that $f_n\rightarrow f$ in $H$ and $\limsup_{n\to\8}\<-\mathscr Lf_n,f_n\>\le \<-\mathscr Lf,f\>$. This condition
has been dropped in Theorem \ref{thm}. \end{itemize} \end{remark}

\section{Proof of Theorem \ref{thm1}}\label{sec3}
With the preceding general framework at hand, in this section we intend to present the proof of Theorem \ref{thm1}. Since it is a little bit
cumbersome to finish the proof of Theorem \ref{thm1}, we split the associated details
and prepare  Propositions
\ref{P1}, \ref{P3} and \ref{P4} below so that the whole proof is much more readable.
To end this, several auxiliary lemmas need to be prepared simultaneously.
We begin with the claim that the measure $\mu$ defined by \eqref{E1-} is indeed an invariant probability measure
of the stochastic system \eqref{E1} with the coefficient
$b_\Phi$ given in \eqref{W}.

\begin{lemma}\label{IPM}
Suppose that $\iint_{\R^{d}\times\R^d}\e^{-(U(x)+\Phi(v))}\,\d x\d v<\8.$  Then,  $\mu$ defined by \eqref{E1-} is an invariant probability measure
of the stochastic system \eqref{E1}.
\end{lemma}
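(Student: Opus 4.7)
The plan is to verify that $\int_{\R^{2d}}(\mathscr L f)\,\d\mu=0$ for all $f$ in a suitable core of test functions (for instance $C_c^\infty(\R^{2d})$ or sufficiently decaying $C_b^2$-functions); invariance of $\mu$ under the semigroup $(P_t)_{t\ge 0}$ then follows from standard martingale/duality arguments. I would split $\mathscr L=\mathscr L_0+\mathscr L_1$ as in \eqref{W2} and treat the two summands separately.

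For the Hamiltonian part $\mathscr L_0$, I would integrate against the density $e^{-(U(x)+\Phi(v))}$ and integrate by parts in $x$ on the term $\langle \nabla\Phi(v),\nabla_x f\rangle$ and in $v$ on $-\langle\nabla U(x),\nabla_v f\rangle$. Since $\nabla_x e^{-U}=-(\nabla U)e^{-U}$ and $\nabla_v e^{-\Phi}=-(\nabla\Phi)e^{-\Phi}$, the two remaining integrals are exact negatives of each other and cancel; this is just the standard observation that $\mathscr L_0$ is $L^2(\mu)$-antisymmetric, so in particular $\mathscr L_0^*\mu=0$.

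For the non-local part $\mathscr L_1$, since it acts only in $v$, Fubini reduces the problem to showing $\int_{\R^d}(\mathscr L_1 g)(v)\,e^{-\Phi(v)}\,\d v=0$ for smooth test functions $g$. Setting $F(v):=e^{-\Phi(v)}$, the defining relation \eqref{W} says $b_\Phi F=\nabla\bigl((-\Delta)^{\alpha/2-1}F\bigr)$. Integrating $\langle b_\Phi,\nabla g\rangle F$ by parts in $v$ therefore yields $-\int (-\Delta)^{\alpha/2-1}F\cdot \Delta g\,\d v$, and by the composition rule $(-\Delta)^{\alpha/2-1}\circ(-\Delta)=(-\Delta)^{\alpha/2}$ together with self-adjointness of the Riesz-potential operator this equals $\int F\cdot(-\Delta)^{\alpha/2}g\,\d v$. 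It therefore cancels exactly the $-\int(-\Delta)^{\alpha/2}g\cdot F\,\d v$ contribution coming from the jump part of $\mathscr L_1$, which is the claim. Equivalently, $F$ solves the stationary fractional Fokker--Planck equation $(-\Delta)^{\alpha/2}F+\mathrm{div}(b_\Phi F)=0$ highlighted in the excerpt just after \eqref{W}.

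The step I expect to be the main obstacle is the rigorous justification of this symbolic calculus with the Riesz potential $(-\Delta)^{\alpha/2-1}F$: this operator has order $2-\alpha$ and is only classically well defined under $d>2-\alpha$ (which guarantees that its kernel $c\,|z|^{2-\alpha-d}$ lies in the Riesz-potential regime), and one must moreover check that $(-\Delta)^{\alpha/2-1}F$ itself has enough decay at infinity to carry the integration by parts without boundary contributions and to allow $(-\Delta)^{\alpha/2-1}$ to be moved across $g$ under the integral. This is where the integrability and regularity assumptions in $({\bf A}_\Phi)$, in particular $({\bf A}_{\Phi,3})$, come in. In practice I would first verify the identity on Schwartz $g$ via Fourier transforms and then pass to the full statement by a cutoff/approximation argument on $F$. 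Finiteness and normalization of $\mu$ as a probability measure are immediate from the stated integrability.
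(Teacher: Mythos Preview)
Your approach is correct and essentially the same as the paper's: the paper verifies the dual statement $\mathscr L^\dag\big(\e^{-(U+\Phi)}\big)=0$ directly on the density, splitting into $\mathscr L_0^\dag$ and $\mathscr L_1^\dag$, but the key computation is exactly the identity \eqref{W8} (equivalently, the stationary fractional Fokker--Planck equation you cite), and your integration-by-parts argument is just the test-function side of that same duality. One minor remark: the lemma as stated assumes only integrability of $\e^{-(U+\Phi)}$, not the full $({\bf A}_\Phi)$, and the paper's proof is correspondingly formal; the analytic justifications you flag are legitimate but are not addressed in the paper's proof either.
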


\begin{proof}
To show that the probability measure $\mu$  defined by \eqref{E1-} is an invariant probability measure of the system \eqref{E1}, it is sufficient to verify
\begin{equation}\label{W5}
\big(\mathscr L^\dag \e^{-(U(\cdot)+\Phi(\cdot))}\big)(x,v)=0.
\end{equation}
Herein, $\mathscr L^\dag$ is
  the $L^2(\d x,\d v)$-adjoint  of the generator $\mathscr L$ associated with  the system \eqref{E1}. According to \eqref{W2}, it is easy to see that  \begin{equation*}
 \begin{split}
 (\mathscr L^\dag f)(x,v)=&\big(-\mbox{div}_x\big(\nn \Phi(v) f(x,v) \big)+\mbox{div}_v\big(f(x,v)\nn U(x)\big)\big)\\
 &+\big(-\mbox{div}_v\big(f(x,v)b_\Phi(v)\big)-(-\Delta_v )^{{\alpha}/{2}}f(x,v)\big)\big)\\
= & :(\mathscr L_0^\dag f)(x,v)+(\mathscr L_1^\dag f)(x,v),
 \end{split}
\end{equation*}
where $\mbox{div}_x$ and $\mbox{div}_v$ denote the divergence operators with respect to the $x$-variable and the $v$-variable, respectively.

Via the chain rule, we find that
 \begin{align*}
\big(\mathscr L_0^\dag \e^{-(U(\cdot)+\Phi(\cdot))}\big)(x,v)=&-\e^{-\Phi(v)}\mbox{div}_x\big(\e^{-U(x)}\nn \Phi(v)\big)+\e^{-U(x)}\mbox{div}_v\big(\e^{-\Phi(v)}\nn U(x)\big)\\
 =& \e^{-(U(x)+\Phi(v))}\<\nn U(x),\nn \Phi(v)\>- \e^{-(U(x)+\Phi(v))}\<\nn U(x),\nn\Phi(v))\>=0
\end{align*}
and that
\begin{align*}
\big(\mathscr L_1^\dag \e^{-(U(\cdot)+\Phi(\cdot))}\big)(x,v)=&-\e^{-U(x)}\big(\mbox{div}\big(\e^{-\Phi(v)}b_\Phi(v)\big)+(-\Delta )^{{\alpha}/{2}}\e^{-\Phi(v)}\big)\\
=&-\e^{-U(x)}\big(\mbox{div}\big(\nn \big((-\Delta)^{{-(2-\alpha)}/{2}}\e^{-\Phi(v)}\big)\big)+(-\Delta )^{{\alpha}/{2}}\e^{-\Phi(v)}\big)=0,
\end{align*}
where in the last equality we have taken the definition of $b_\Phi(v)$ into consideration and used the basic fact that
\begin{equation}\label{W8}
-(-\Delta)^{{\alpha}/{2}}\e^{-\Phi(v)}=\mbox{div}\big(\nn \big((-\Delta)^{-(2-{\alpha)}/{2}}\e^{-\Phi(v)}\big).
\end{equation}

Putting both equalities together, we can conclude that \eqref{W5} holds true, and so the desired assertion \eqref{W5} follows.
\end{proof}

In the following, we always assume that
$$C_U: =\int_{\R^d}\e^{-U(x)}\,\d x\in(0,\8),
\quad C_\Phi: =\int_{\R^d}\e^{-\Phi(v)}\,\d v\in(0,\8).$$
Write $\mu=\mu_1\times \mu_2$, where
 \begin{equation*}
\mu_1(\d x): =\frac{1}{C_U}\e^{-U(x)}\,\d x,\quad \mu_2(\d v): =\frac{1}{C_\Phi}\e^{-\Phi(v)}\,\d v.
\end{equation*}

In order to apply Theorem \ref{thm} to  the stochastic system \eqref{E1} with the coefficient $b_\Phi$ given in \eqref{W},
the main procedure  is to confirm all  assumptions $({\bf H}_1)$-$({\bf H}_4)$, step by step.
For this purpose, one need to specify explicitly the Hilbert space $H$, the closed subspace $H_0$, the core $\mathfrak D$ of $\mathscr L$ given by \eqref{W2},
and  the projection operator $\pi$.
More explicitly, for the invariant probability measure $\mu$ given by \eqref{E1-}, define
\begin{equation*}
H = L^2_0(\mu):=\big\{f\in L^2(\mu): \mu(f)=0\big\},
\end{equation*}
which is a Hilbert space endowed with the scalar product $\<f,g\>_2:=\mu(fg)$  and the induced norm $\|f\|_2:=\<f,f\>_2^{1/2}$
for $f,g\in L^2_0(\mu)$.
Define
\begin{equation*}
(\pi f)(x)=\int_{\R^d}f(x,v)\,\mu_2(\d v),\quad f\in L^2_0(\mu);
\end{equation*}
that is,  the velocity is drawn afresh from the marginal invariant distribution, while the position
is left unchanged. Direct calculations show that $\pi=\pi^*$ and $\pi^2=\pi$, so $\pi:L^2_0(\mu)\to H_0$  is an
orthogonal projector, where the subspace
\begin{equation*}
H_0:=\big\{f\in  L^2_0(\mu): f(x,v) \mbox{ is  independent of } v \big\}.
\end{equation*}
Let $C_b^\infty(\R^{2d})$ be the set of bounded functions on $\R^{2d}$ having bounded derivatives of any order.
Set
\begin{equation*}
C_{b,c}^\infty(\R^{2d}):=\big\{f\in C^\infty_b(\R^{2d}):(\nn_xf,\nn_vf) \mbox{ has compact support}\big\}
\end{equation*}
and
\begin{equation*}
\mathfrak D :
=L^2_0(\mu)\cap C_{b,c}^\infty(\R^{2d})=\big\{f\in C_{b,c}^\infty(\R^{2d}): \mu(f)=0\big\},
\end{equation*}
which obviously is a core of  $\mathscr L$.

In the following, the operators $\mathscr L$, $\mathscr L_0$ and $\mathscr L_1$ are given in \eqref{W2}. Let $\mathscr L^*$, $\mathscr L_0^*$ and $\mathscr L_1^*$ be the $L^2(\mu)$-adjoint operators  of $\mathscr L$, $\mathscr L_0 $ and $\mathscr L_1 $, respectively. Let $(\mathscr L,\mathcal D(\mathscr L))$, $(\mathscr L_0,\mathcal D(\mathscr L_0))$, $(\mathscr L_1,\mathcal D(\mathscr L_1))$ and $(\mathscr L_1^*,\mathcal D(\mathscr L_1^*))$ be the closures in $L^2_0(\mu)$ of  $(\mathscr L,\mathfrak D )$, $(\mathscr L_0,\mathfrak D )$, $(\mathscr L_1,\mathfrak D )$, and $(\mathscr L_1^*,\mathfrak D )$, respectively.

With the aid of all the previous preliminaries, we
prepare   the following  several propositions to complete the proof of Theorem \ref{thm1}.
\begin{proposition}\label{P1}
\it Suppose that
 $\Phi(v)=\psi(|v|^2)$ for some $\psi\in C^2(\R_+;\R_+)$.
  If $C_U,C_\Phi\in(0,\8)$ and $\mu_2(|\nn \Phi|)<\infty,$
Then,
both Assumptions $({\bf H}_1)$ and  $({\bf H}_2)$ hold true.\end{proposition}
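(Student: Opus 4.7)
The plan is to verify the four ingredients of $({\bf H}_1)$ and $({\bf H}_2)$ by direct computation on $\mathfrak{D}$ and $H_0$, leveraging three structural facts: the antisymmetry of $\mathscr L_0$ on $L^2(\mu)$ (coming from the Hamiltonian transport and the product form $\mu\propto e^{-U(x)-\Phi(v)}\,\d x\,\d v$), the defining identity \eqref{W8} for $b_\Phi$ (which encodes the $\mathscr L_1$-invariance of $\mu_2$), and the radial symmetry of $\Phi$ and $\mu_2$, which will make the first moment of $\nn\Phi$ against $\mu_2$ vanish.

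I first address $({\bf H}_1)$. The inclusion $\mathfrak{D}\subset\mathcal D(\mathscr L)$ is immediate since $\mathfrak{D}$ is a core. For $\mathfrak{D}\subset\mathcal D(\mathscr L^*)$ I compute the formal $L^2(\mu)$-adjoint of each summand on $\mathfrak{D}$. Integration by parts against $e^{-(U(x)+\Phi(v))}$, together with the separation $U=U(x)$ and $\Phi=\Phi(v)$, yields the antisymmetry $\mathscr L_0^*f=-\mathscr L_0 f$ for $f\in\mathfrak{D}$. For $\mathscr L_1^*$, using self-adjointness of $(-\Delta)^{\alpha/2}$ on $L^2(\d v)$, the defining relation $b_\Phi e^{-\Phi}=\nn((-\Delta)^{\alpha/2-1}e^{-\Phi})$, and \eqref{W8}, I obtain
\[
\mathscr L_1^*h(v)=-\langle b_\Phi(v),\nn h(v)\rangle+h(v)\,e^{\Phi(v)}(-\Delta)^{\alpha/2}e^{-\Phi}(v)-e^{\Phi(v)}(-\Delta)^{\alpha/2}(h\,e^{-\Phi})(v)
\]
for $h\in\mathfrak{D}$, a bona fide $L^2(\mu)$-function, so $\mathfrak{D}\subset\mathcal D(\mathscr L^*)$. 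For the second half of $({\bf H}_1)$, every $f\in H_0$ has the form $f(x,v)=\tilde f(x)$, so $\nn_v f\equiv 0$ and $(-\Delta_v)^{\alpha/2}f\equiv 0$, giving $\mathscr L_1 f=0$ formally; substituting $h=\tilde f$ in the display above, $\tilde f(x)$ pulls out of $(-\Delta_v)^{\alpha/2}$, the two non-gradient pieces of $\mathscr L_1^*\tilde f$ cancel, and $\mathscr L_1^*\tilde f=0$. To promote these formal identities to $H_0\subset\mathcal D(\mathscr L_1)\cap\mathcal D(\mathscr L_1^*)$ I approximate $\tilde f\in L_0^2(\mu_1)$ by truncated products $\tilde f_n(x)\eta_R(x)\chi_R(v)\in\mathfrak{D}$ (with $x$- and $v$-cutoffs $\eta_R,\chi_R$, adjusted by a constant to stay in $L^2_0(\mu)$) and check that the images under $\mathscr L_1$ and $\mathscr L_1^*$ tend to $0$ in $L^2(\mu)$.

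Next I address $({\bf H}_2)$. For $f\in\mathfrak{D}$, $\pi f(x)=\int f(x,v)\,\mu_2(\d v)$ is smooth with $\nn_x\pi f=\pi(\nn_x f)$ compactly supported in $x$ (the $x$-projection of the compact support of $\nn_x f$). Because $\pi f$ is independent of $v$,
\[
(\mathscr L_0\pi f)(x,v)=\langle\nn\Phi(v),\nn_x\pi f(x)\rangle,
\]
controlled in $\mu$-integral by $\mu_2(|\nn\Phi|)<\infty$ together with boundedness and compact $x$-support of $\nn_x\pi f$; truncating $\pi f$ by products in $\mathfrak{D}$ then places $\pi f\in\mathcal D(\mathscr L_0)$. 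The crucial cancellation is
\[
(\pi\mathscr L_0\pi f)(x)=\int_{\R^d}\langle\nn\Phi(v),\nn_x\pi f(x)\rangle\,\mu_2(\d v)=\Big\langle\nn_x\pi f(x),\int_{\R^d}\nn\Phi(v)\,\mu_2(\d v)\Big\rangle.
\]
Since $\Phi(v)=\psi(|v|^2)$, $\nn\Phi(v)=2\psi'(|v|^2)v$ is odd in $v$ while $\mu_2(\d v)\propto e^{-\psi(|v|^2)}\,\d v$ is invariant under $v\mapsto -v$; hence $\int\nn\Phi\,\d\mu_2=0$ and $\pi\mathscr L_0\pi f\equiv 0$, which is $({\bf H}_2)$.

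The conceptual content of the proof is thus short: \eqref{W8} and self-adjointness of $(-\Delta)^{\alpha/2}$ on $L^2(\d v)$ close the $\mathscr L_1^*$-calculation, and an odd-even parity argument kills $\pi\mathscr L_0\pi$. I expect the main obstacle to be purely technical, namely verifying through tensor-product cutoffs that the required approximants really live in $\mathfrak{D}$ and converge in the graph norms of $\mathscr L_0$, $\mathscr L_1$ and $\mathscr L_1^*$ inside $L^2(\mu)$; this bookkeeping is where the finiteness of $C_U$, $C_\Phi$ and $\mu_2(|\nn\Phi|)$ is consumed.
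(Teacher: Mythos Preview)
Your proposal is correct and follows essentially the same route as the paper: antisymmetry of $\mathscr L_0$ by integration by parts, the adjoint formula for $\mathscr L_1^*$ via \eqref{W8}, and the odd/even parity argument to kill $\pi\mathscr L_0\pi$. The one place where the paper is cleaner is the approximation step for $H_0\subset\mathcal D(\mathscr L_1)\cap\mathcal D(\mathscr L_1^*)$: rather than tensor products $\tilde f_n(x)\eta_R(x)\chi_R(v)$, the paper simply takes $g_n(x,v):=\tilde g_n(x)$ with $\tilde g_n\in C_{b,c}^\infty(\R^d)$ and $\mu_1(\tilde g_n)=0$. Such $g_n$ already lie in $\mathfrak{D}$ (the $v$-gradient is identically zero, hence trivially compactly supported), and they satisfy $\mathscr L_1 g_n=\mathscr L_1^* g_n=0$ \emph{exactly}, so closedness of the operators gives the conclusion immediately. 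Your $v$-cutoff $\chi_R$ is unnecessary and would force you to control $\mathscr L_1\chi_R$ and $\mathscr L_1^*\chi_R$ in $L^2(\mu_2)$---estimates on $b_\Phi$ and on the fractional Laplacian of cutoffs that are not obviously available under the bare hypotheses of the proposition.
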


\begin{proof} (1) {\it Examination of $({\bf H}_1)$}. By virtue of $C_U,C_\Phi\in(0,\8)$, both $\mu_1$ and $\mu_2$ are probability measures
so $\mu=\mu_1\times\mu_2$ is also a probability measure.
Recall that $\mathscr L_0^*$ and $\mathscr L_1^*$ are the $L^2(\mu)$-adjoint operators  of $\mathscr L_0 $ and $\mathscr L_1 $, respectively. Then,  $\mathscr L^*=\mathscr L_0^*+\mathscr L_1^*$. Note that   $\mathscr L_0^*=-\mathscr L_0$  so     $\mathscr L_0$  is an $L^2(\mu)$-antisymmetric operator.
By the integration by parts formula, it follows that for $f\in\mathfrak D,$
\begin{equation}\label{W10}
\begin{split}
(\mathscr L_1^*f)(x,v)=&- \e^{\Phi(v)}\mbox{div}_v\big(f(x,v)\nn \big((-\Delta)^{-(2-\alpha)/2}\e^{-\Phi(v)}\big)\big) \\
&-\e^{\Phi(v)}(-\Delta_v )^{\alpha/2}\big(f(x,v)\e^{-\Phi(v)}\big).\end{split}
\end{equation}
Obviously, $\mathfrak D\subset\mathcal D(\mathscr L)\cap\mathcal D(\mathscr L^*)$. Whence, to validate the assumption $({\bf H}_1)$,
it remains to show that
for any $f\in L_0^2(\mu)$,
\begin{equation}\label{W6}
\pi f\in \mathcal D(\mathscr L_1)\cap\mathcal D(\mathscr L^*_1),\quad (\mathscr L_1\pi f)(x,v)=(\mathscr L_1^*\pi f)(x,v)=0.
\end{equation}

 In retrospect, $(\mathscr L_1,\mathcal D(\mathscr L_1))$ and $(\mathscr L_1^*,\mathcal D(\mathscr L_1^*))$ are closed operators. Then, according to the closed graph theorem  (see \cite[Theorem 4.13-3, p. 293]{Kre}),  \eqref{W6} follows once there exists a sequence of functions  $(g_n)_{n\ge1}\subset \mathfrak D$
  so that
\begin{equation}\label{W7}
\lim_{n\to\8}\|g_n-\pi f\|_2=0,\quad (\mathscr L_1g_n)(x,v)=(\mathscr L_1^*g_n)(x,v)=0 \mbox{ for all } n\ge1.
\end{equation}
Indeed, for any $f\in L_0^2(\mu)$ (so $\mu_1(\pi f)=0$), there exists a sequence of functions $(\tilde{g}_n)_{n\ge1}\subset
C_{b,c}^\8(\R^{d})$ such that  $\mu_1(\tilde{g}_n)=0$
 and $\lim_{n\to\8}\mu_1(|\tilde{g}_n-\pi f|^2)=0$. For any $n\ge 1$, set $g_n(x,v): =\tilde{g}_n(x)$, which is independent of the velocity component.
 It is easy to see that $(g_n)_{n\ge1}\subset C_{b,c}^\8(\R^{2d})$, since $(\tilde{g}_n)_{n\ge1}\subset \mathfrak D$  with $\mu_1(\tilde{g}_n)=0$ and
 $\mu=\mu_1\times\mu_2.$
On the other hand, by making use of $\lim_{n\to\8}\mu_1(|\tilde{g}_n-\pi f|^2)=0$, taking the structure of $(g_n)_{n\ge1}$ into account, and noticing that $\mu=\mu_1\times\mu_2$ again, one can easily see that $\lim_{n\to\8}\|g_n-\pi f\|_2=0$ holds true.  Furthermore, because the designed $(g_n)_{n\ge1}$ has nothing to do with the velocity component, it follows from the definitions of $\mathscr L_1$ and $\mathscr L_1^*$ that,  $(\mathscr L_1g_n)(x,v)=0$ and
 \begin{equation*}
(\mathscr L_1^*g_n)(x,v)=- \e^{\Phi(v)}\tilde{g}_n(x)\big(\mbox{div}\big( \nn \big((-\Delta)^{-(2-\alpha)/2}\e^{-\Phi(v)}\big)\big) +(-\Delta )^{\alpha/2}(\e^{-\Phi(v)})\big)=0,
\end{equation*}
where the second identity is due to  \eqref{W8}. Consequently, the requirement \eqref{W7} is verified.

\ \
 \noindent(2) {\it Examination of $({\bf H}_2)$}.
It is obvious to see that $\pi\mathfrak D\subset\mathcal D(\mathscr L_0)$.
Furthermore, in accordance with the definitions of $\mathscr L_0$ and $\pi$, for any $f\in \mathfrak D$,
\begin{equation*}
\begin{split}
(\pi \mathscr L_0\pi f) (x)=\int_{\R^d}(\mathscr L_0\pi f)(x,v)\,\mu_2(\d v)
&=\int_{\R^d}   \< \nn \Phi(v ),\nn(\pi f)(x)\>\,  \mu_2(\d v)\\
&=2\int_{\R^d}   \psi'(|v|^2 )\< v ,\nn(\pi f)(x)\>\,  \mu_2(\d v),
\end{split}
\end{equation*}
where the last identity is due to $\Phi(v)=\psi(|v|^2)$. Then, taking advantage of $\mu_2(|\nn \Phi|)<\8$ and the rotationally invariant property of the probability measure
$\mu_2$
yields  $(\pi \mathscr L_0\pi f) (x)=0$.
Therefore, the confirmation of $({\bf H}_2)$ is complete.
\end{proof}

 Now we proceed to check the Assumption $({\bf H}_3)$. Before performing this task, we provide the explicit expression of the energy form corresponding to the symmetric operator $\mathscr L_1+\mathscr L_1^*$, where the non-local operator $\mathscr L_1$ was defined in \eqref{D0}.

 \begin{lemma}\label{lemma2} For any $f\in C^2_{b,c}(\R^d)$, it holds that
$$-\mu_2\big( f(\mathscr L_1+\mathscr L_1^*)f \big)=c_{d,\alpha}\mathscr E_{\alpha,\Phi}(f),$$ where
\begin{equation}\label{DD3}
c_{d,\alpha}:=2^\alpha\Gamma((d+\alpha)/2)/(\pi^{d/2}|\Gamma(-\alpha/2)|),\quad \mathscr E_{\alpha,\Phi}(f):=\iint_{\R^d\times\R^d}\frac{(f(v)-f(\bar v))^2}{|v-\bar v|^{d+\alpha}}\,\d v\,\mu_2(\d \bar v).
\end{equation}
\end{lemma}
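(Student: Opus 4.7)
The plan is to exploit the antisymmetric identity $\mu_2(f\mathscr{L}_1 f)=\mu_2(f\mathscr{L}_1^* f)$ (which is just the definition of the $L^2(\mu_2)$-adjoint applied to the real-valued $f$) to reduce the claim to showing
\[
-2\mu_2(f\mathscr{L}_1 f)=c_{d,\alpha}\mathscr{E}_{\alpha,\Phi}(f).
\]
I split $\mathscr{L}_1 f=\langle b_\Phi,\nabla f\rangle-(-\Delta)^{\alpha/2}f$ and treat the two pieces separately. For the drift piece, I plug in the definition $e^{-\Phi}b_\Phi=\nabla\bigl((-\Delta)^{\alpha/2-1}e^{-\Phi}\bigr)$, write $f\nabla f=\tfrac12\nabla f^2$, and integrate by parts (no boundary terms since $f\in C_{b,c}^2$). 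Using the identity $\Delta\bigl((-\Delta)^{\alpha/2-1}e^{-\Phi}\bigr)=-(-\Delta)^{\alpha/2}e^{-\Phi}$ (which is \eqref{W8} in disguise), this yields
\[
\frac{1}{C_\Phi}\int_{\R^d}f(v)\langle b_\Phi(v),\nabla f(v)\rangle e^{-\Phi(v)}\,\d v=\frac{1}{2C_\Phi}\int_{\R^d}f^2(v)(-\Delta)^{\alpha/2}e^{-\Phi(v)}\,\d v.
\]

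Next I invoke the standard Bochner/Plancherel bilinear representation for the fractional Laplacian,
\[
\int_{\R^d}g(v)(-\Delta)^{\alpha/2}h(v)\,\d v=\frac{c_{d,\alpha}}{2}\iint_{\R^d\times\R^d}\frac{(g(v)-g(\bar v))(h(v)-h(\bar v))}{|v-\bar v|^{d+\alpha}}\,\d v\,\d\bar v,
\]
applied with $(g,h)=(f^2,e^{-\Phi})$ for the drift-converted term and with $(g,h)=(fe^{-\Phi},f)$ for the $-(-\Delta)^{\alpha/2}$ piece. Combining, the integrand picks up the algebraic combination
\[
-(f^2(v)-f^2(\bar v))(e^{-\Phi(v)}-e^{-\Phi(\bar v)})+2(f(v)e^{-\Phi(v)}-f(\bar v)e^{-\Phi(\bar v)})(f(v)-f(\bar v)),
\]
which expands and factors exactly to $\bigl(e^{-\Phi(v)}+e^{-\Phi(\bar v)}\bigr)(f(v)-f(\bar v))^2$. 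This factorisation is the heart of the argument; it is a short, direct computation but must be done carefully.

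Finally, exploiting the symmetry $v\leftrightarrow\bar v$ of the measure $|v-\bar v|^{-(d+\alpha)}\,\d v\,\d\bar v$ and recognising $\mu_2(\d\bar v)=C_\Phi^{-1}e^{-\Phi(\bar v)}\,\d\bar v$ gives
\[
\iint\frac{(e^{-\Phi(v)}+e^{-\Phi(\bar v)})(f(v)-f(\bar v))^2}{|v-\bar v|^{d+\alpha}}\,\d v\,\d\bar v=2C_\Phi\mathscr{E}_{\alpha,\Phi}(f),
\]
so that $-2\mu_2(f\mathscr{L}_1 f)=c_{d,\alpha}\mathscr{E}_{\alpha,\Phi}(f)$, as required. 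The only genuine obstacle is justifying the integration by parts and the Bochner identity in the present setting; compact support of $f$ and $\nabla f$ together with the rapid decay (and smoothness away from infinity) of $(-\Delta)^{\alpha/2-1}e^{-\Phi}$ supplied by the Riesz representation (valid thanks to $d>2-\alpha$) make all boundary and integrability issues vanish, and the Plancherel formula for $(-\Delta)^{\alpha/2}$ applies to Schwartz-class-like functions in the standard way.
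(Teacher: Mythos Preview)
Your proof is correct and follows essentially the same route as the paper: integrate by parts on the drift term using $e^{-\Phi}b_\Phi=\nabla((-\Delta)^{\alpha/2-1}e^{-\Phi})$ and \eqref{W8}, then express the fractional Laplacian pieces via their singular-integral representations and combine algebraically. Your version is slightly tidier in that you reduce to $-2\mu_2(f\mathscr L_1 f)$ via the adjoint identity (so you never write out $\mathscr L_1^*$ explicitly) and invoke the symmetrised bilinear form directly rather than the pointwise p.v.\ formula, but the substance is the same; one small slip is that $f\in C^2_{b,c}(\R^d)$ only guarantees $\nabla f$ (not $f$ itself) has compact support, though this is all your integration by parts actually needs.
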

\begin{proof}
It follows from the definitions of $\mathscr L_1$ and $\mathscr L_1^*$ (in particular, the definition of $b$ involved in), as well as the chain rule, that for all $f\in C_{b,c}^2(\R^d), $
\begin{equation*}
\begin{split}
\mu_2\big( f(\mathscr L_1+\mathscr L_1^*)f \big)&=\frac{1}{C_\Phi}\int_{\R^d}  \<f(v) \nn ((-\Delta)^{-(2-\alpha)/2}\e^{-\Phi(v)}),\nn f(v)\> \,\d v\\
&\quad-\frac{1}{C_\Phi}\int_{\R^d}f(v)\mbox{div}\big(f(v)\nn \big((-\Delta)^{-(2-\alpha)/2}\e^{-\Phi(v)}\big)\big)\,\d v\\
&\quad-\frac{1}{C_\Phi}\int_{\R^d}f(v)  \big( \e^{-\Phi(v)} (-\Delta )^{\alpha/2}f(v)+(-\Delta )^{\alpha/2}\big(f(v)\e^{-\Phi(v)}\big)\big)\,\d v\\
&=\frac{1}{C_\Phi}\int_{\R^d}f^2(v)(-\Delta )^{\alpha/2}\e^{-\Phi(v)}\,\d v\\
&\quad-\frac{1}{C_\Phi}\int_{\R^d}  f(v)\big( \e^{-\Phi(v)} (-\Delta )^{\alpha/2}f(v)+(-\Delta )^{\alpha/2}\big(f(v)\e^{-\Phi(v)}\big)\big)\,\d v.
\end{split}
\end{equation*}
This, together with the following two facts:
\begin{equation*}
(-\Delta )^{\alpha/2}\e^{-\Phi(v)}=c_{d,\alpha}\,{\rm p.v.} \int_{\R^d}\frac{\e^{-\Phi(v)}-\e^{-\Phi(\bar v)}}{|v-\bar v|^{d+\alpha}}\,\d \bar v
\end{equation*}
and
\begin{equation*}
\begin{split}
&\e^{-\Phi(v)} (-\Delta )^{\alpha/2}f(v)+(-\Delta )^{\alpha/2}\big(f(v)\e^{-\Phi(v)}\big)\\
&=c_{d,\alpha}\e^{-\Phi(v)}\,{\rm p.v.}\int_{\R^d}\frac{f(v)-f(\bar v)}{|v-\bar v|^{d+\alpha}}\,\d\bar v+c_{d,\alpha}\,{\rm p.v.}\int_{\R^d}\frac{f(v)\e^{-\Phi(v)}-f(\bar v)\e^{-\Phi(\bar v)}}{|v-\bar v|^{d+\alpha}}\,\d\bar v,
\end{split}
\end{equation*}
where $c_{d,\alpha}$ was defined in \eqref{DD3}, yields
\begin{equation*}
\begin{split}
\mu_2\big( f(\mathscr L_1+\mathscr L_1^*)f \big)
&=-\frac{c_{d,\alpha}}{C_\Phi}
 \iint_{\R^d\times\R^d}\frac{f(v)(f(v)-f(\bar v))}{|v-\bar v|^{d+\alpha}}\,\d v\,\e^{-\Phi(\bar v)}\,\d \bar v\\
&\quad-\frac{c_{d,\alpha}}{C_\Phi}
\iint_{\R^d\times\R^d}\frac{f(v)(f(v) -f(\bar v))}{|v-\bar v|^{d+\alpha}}\,\e^{-\Phi(v)}\,\d v\d\, \bar v.
\end{split}
\end{equation*}
Subsequently, by exchanging the variables $v$ and $\bar v$ in the second integral above, we deduce that
\begin{equation*}
\mu_2\big( f(\mathscr L_1+\mathscr L_1^*)f \big)
=-c_{d,\alpha} \iint_{\R^d\times\R^d}\frac{ (f(v)-f(\bar v))^2}{|v-\bar v|^{d+\alpha}}\,\d v\,\mu_2(\d\bar v).
\end{equation*}
Therefore, the desired assertion is proved.
\end{proof}

 With Lemma \ref{lemma2} hand, the Assumption $({\bf H}_3)$ is verifiable provided that both the marginal  $\mu_1$ and  the marginal $\mu_2$ fulfill the Poincar\'{e} inequalities. This statement is detailed in the following proposition.

\begin{proposition}\label{P3} Assume that $C_U,C_\Phi\in(0,\8)$ and
 $\Phi(v)=\psi(|v|^2)$  with  $0<\mu_2(|\nn\Phi|^2)<\infty $ for some $\psi\in C^2(\R_+;\R_+)$.
If $\mu_1$ and $\mu_2$ satisfy
the following two Poincar\'{e} inequalities respectively, i.e.,
there exist constants $c_1$ and $c_2>0$ such that
\begin{equation}\label{E4}
{\rm Var}_{\mu_1}(f)\le c_1\mu_1(|\nn f|^2),\quad f\in C_b^2(\R^d)
\end{equation}
and
\begin{equation}\label{E14}
{\rm Var}_{\mu_2}(f)\le c_2\mathscr E_{\alpha,\Phi}(f),\quad f\in C_b^2(\R^d),
\end{equation}
where $\mathscr E_{\alpha,\Phi}$ was defined in \eqref{DD3},
then Assumption $({\bf H}_3)$ holds true.
\end{proposition}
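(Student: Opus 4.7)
The plan is to verify the two parts of Assumption $({\bf H}_3)$ separately. The microscopic coercivity \eqref{EE1} will be obtained by applying the Poincar\'e inequality \eqref{E14} for $\mu_2$ slice-wise in the position variable and then using Lemma \ref{lemma2} to identify the resulting Dirichlet form with $\<-\mathscr L_1 f,f\>_2$. The macroscopic coercivity \eqref{EE2} will follow from a rotational-invariance computation that reduces $\|\mathscr L_0\pi f\|_2^2$ to a positive multiple of $\mu_1(|\nabla\pi f|^2)$, to which the Poincar\'e inequality \eqref{E4} for $\mu_1$ may be applied.

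For \eqref{EE1}, I would fix $f\in\mathfrak D$ and observe that for each $x\in\R^d$ the function $v\mapsto f(x,v)$ lies in $C^2_{b,c}(\R^d)$ and has $\mu_2$-mean equal to $(\pi f)(x)$. Hence \eqref{E14} gives
\[
\int_{\R^d}|f(x,v)-(\pi f)(x)|^2\,\mu_2(\d v)=\mathrm{Var}_{\mu_2}\bigl(f(x,\cdot)\bigr)\le c_2\,\mathscr E_{\alpha,\Phi}\bigl(f(x,\cdot)\bigr).
\]
Integrating in $x$ against $\mu_1$ and invoking Lemma \ref{lemma2} applied to the slice $f(x,\cdot)$ for fixed $x$ yields
\[
\|(I-\pi)f\|_2^2\le c_2\int_{\R^d}\mathscr E_{\alpha,\Phi}\bigl(f(x,\cdot)\bigr)\,\mu_1(\d x)=-\frac{c_2}{c_{d,\alpha}}\bigl\<f,(\mathscr L_1+\mathscr L_1^*)f\bigr\>_2=\frac{2c_2}{c_{d,\alpha}}\,\<-\mathscr L_1 f,f\>_2,
\]
which is \eqref{EE1} with $\alpha_1:=2c_2/c_{d,\alpha}$. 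The non-negativity of $\mathscr E_{\alpha,\Phi}$ ensures that Fubini's theorem is legitimate, and the compact support of $f\in\mathfrak D$ makes the double integrals absolutely convergent.

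For \eqref{EE2}, I would first work on $\pi\mathfrak D$. Since $\pi f$ is independent of $v$, one has $\nabla_v(\pi f)\equiv 0$ and therefore $(\mathscr L_0\pi f)(x,v)=\<\nabla\Phi(v),\nabla(\pi f)(x)\>$. The radial structure $\Phi(v)=\psi(|v|^2)$ together with the rotational invariance of $\mu_2$ yields the elementary identity
\[
\int_{\R^d}\<\nabla\Phi(v),a\>^2\,\mu_2(\d v)=\frac{\mu_2(|\nabla\Phi|^2)}{d}\,|a|^2,\qquad a\in\R^d,
\]
which follows by expanding $\nabla\Phi=2\psi'(|v|^2)v$ and noting that the cross-moments $\int v_iv_j\,g(|v|^2)\,\mu_2(\d v)$ vanish for $i\ne j$ and are constant in $i$ when $i=j$. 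Hence $\|\mathscr L_0\pi f\|_2^2=d^{-1}\mu_2(|\nabla\Phi|^2)\,\mu_1(|\nabla\pi f|^2)$. Because $\mu(f)=0$ forces $\mu_1(\pi f)=0$, inequality \eqref{E4} applied to $\pi f$ delivers \eqref{EE2} on $\pi\mathfrak D$ with $\alpha_2:=c_1 d/\mu_2(|\nabla\Phi|^2)$; the hypothesis $\mu_2(|\nabla\Phi|^2)>0$ is used precisely here. Extending \eqref{EE2} from $\pi\mathfrak D$ to all of $\mathcal D(\mathscr L_0\pi)$ is a routine density/closure argument since $\mathfrak D$ is a core and both sides are continuous in the graph norm of $\mathscr L_0\pi$. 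The only mildly delicate point I foresee is the slice-wise bookkeeping in the microscopic step, but this is handled automatically by the compact $v$-support of elements of $\mathfrak D$ and the non-negativity of the kernel defining $\mathscr E_{\alpha,\Phi}$; everything else is essentially algebraic once Lemma \ref{lemma2} and the two Poincar\'e inequalities are in hand.
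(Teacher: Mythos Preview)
Your proof is correct and follows essentially the same route as the paper: slice-wise application of the Poincar\'e inequality \eqref{E14} combined with Lemma~\ref{lemma2} for the microscopic coercivity, and the rotational-invariance computation reducing $\|\mathscr L_0\pi f\|_2^2$ to $d^{-1}\mu_2(|\nabla\Phi|^2)\,\mu_1(|\nabla\pi f|^2)$ followed by \eqref{E4} for the macroscopic coercivity. The only cosmetic difference is that the paper applies Lemma~\ref{lemma2} to $\bar f_x(v)=f(x,v)-(\pi f)(x)$ and then invokes $\mathscr L_1\pi f=\mathscr L_1^*\pi f=0$, whereas you apply it directly to $f(x,\cdot)$ and use $\langle f,(\mathscr L_1+\mathscr L_1^*)f\rangle_2=2\langle f,\mathscr L_1 f\rangle_2$; these are equivalent since $\mathscr E_{\alpha,\Phi}$ is invariant under adding constants.
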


So far, there are plenty of sufficient conditions to demonstrate the  Poincar\'{e} inequality \eqref{E4}; for instance,
  Lyapunov's criterion  concerned with the generator $L=\Delta-\<\nn U,\nn\>$ in \cite[Theorem 4.6.2, p.\ 202]{BGL} and \cite[Theorem 1.4]{BBCG};
Explicit conditions on the potential term $U$, e.g.,
that there exist constants $\alpha>0$ and $R\ge0$ such that $\<x,\nn U\>\ge\alpha|x|$ for all $|x|\ge R$ in \cite[Corollary 1.6]{BBCG} or that $U$ is a convex function in \cite[Corollary 1.9]{BBCG}.
On the other hand, according to \cite[Theorem 1.1 (1) and (2)]{WW} (see also \cite{CW,WJ} for more details), if $\liminf_{|v|\to\8}{\e^{\Phi(v)}}/{|v|^{d+\alpha}}>0,$ then the Poincar\'{e} inequality \eqref{E14} is satisfied as well.

\begin{proof}[Proof of Proposition $\ref{P3}$]
  Via the standard density argument, it is sufficient to show that \eqref{EE1} and \eqref{EE2} hold respectively for all
 $f\in\mathfrak D.$  For any $f\in\mathfrak D,$ it is easy to see that $\pi f\in C_{b,c}^\8(\R^d)$. Let $\bar  f_x(v)=f(x,v)-(\pi f)(x)$ for  $(x,v)\in\R^{2d}$. It is ready to see  that
\begin{equation*}
\|(I-\pi)f\|_2^2=\mu_1(\mu_2(|{\bar  f}_\cdot|^2)).
\end{equation*}
Next, by virtue of the Poincar\'{e} inequality \eqref{E14} and Lemma \ref{lemma2}, as well as $\mu_2(\bar  f_x)=0$ for any $x\in\R^d$, we derive that for each fixed $x\in\R^d,$
 \begin{equation*}
\mu_2(|\bar  f_x|^2)=\mbox{Var}_{\mu_2}(\bar  f_x)\le c_2\mathscr E_{\alpha,\Phi}(\bar  f_x)\le c_2c_{d,\alpha}^{-1}\mu_2\big(\<-(\mathscr L_1+\mathscr L_1^*)\bar  f_x,\bar  f_x\>\big).
\end{equation*}
Then, integrating with respect to  $\mu_1(\d x)$ on both sides  and utilizing $\mu=\mu_1\times\mu_2$ yields
\begin{equation*}
\|(I-\pi)f\|_2^2=\mu_1\big(\mbox{Var}_{\mu_2}(\bar  f_\cdot)\big)\le c_2c_{d,\alpha}^{-1}\mu\big(\<-(\mathscr L_1+\mathscr L_1^*)( f-\pi f),f-\pi f\>\big).
\end{equation*}
This, together with the fact that $(\mathscr L_1\pi f)(x)=(\mathscr L_1^*\pi f)(x)=0,$ leads to
\begin{equation*}
\|(I-\pi)f\|_2^2=\mu_1\big(\mbox{Var}_{\mu_2}(\bar  f_\cdot)\big)\le 2c_2c_{d,\alpha}^{-1}\mu\big(\<- \mathscr L_1  f ,f \>\big).
\end{equation*}
Hence, we conclude that \eqref{EE1} holds true with $\alpha_1=2c_2c_{d,\alpha}^{-1}.$

 In the sequel, we still fix $f\in\mathfrak D.$ According to the definition of $\mathscr L_0$ and the fact that $\pi f$ is independent of the velocity variable, as well as that $\Phi(v)=\psi(|v|^2)$ and $\mu=\mu_1\times\mu_2$,
\begin{equation*}
\begin{split}
\|\mathscr L_0\pi f\|_2^2&=\iint_{\R^{d}\times\R^d}\<\nn \Phi(v),\nn(\pi f)(x)\>^2\,\mu(\d x,\d v) \\
&=4\sum_{i,j=1}^d\int_{\R^d}\partial_i(\pi f)(x)\partial_j(\pi f)(x)\,\mu_1(\d x)\int_{\R^d}\psi'(|v|^2)^2v_iv_j\,\mu_2(\d v),
\end{split}
\end{equation*}
where $v_i$ means the $i$-th component of $v$ and
$\partial_i:=\frac{\d}{\d x_i}$. In view of the radial  property of $h(v)=h(|v|):=\psi'(|v|^2)$ and the assumption that $\mu_2(|\nn\Phi|^2)<\infty$,
\begin{equation*}
\int_{\R^d}\psi'(|v|^2)^2v_iv_j\,\mu_2(\d v)=0, \quad i\neq j.
\end{equation*}
This, along with the symmetric property,  further results in
\begin{equation*}
\begin{split}
\|\mathscr L_0\pi f\|_2^2
&=4\sum_{i=1}^d\int_{\R^d}\big(\partial_i(\pi f)(x)\big)^2\,\mu_1(\d x)\int_{\R^d}\psi'(|v|^2)^2v_i^2\,\mu_2(\d v)\\
&=\frac{1}{d} \mu_1(|\nn(\pi f)|^2) \mu_2(|\nn \Phi|^2).
\end{split}
\end{equation*}
  Then, by invoking the precondition  $0<\mu_2(|\nn\Phi|^2)<\8$, it follows from  the Poincar\'{e} inequality \eqref{E4} that for all $f\in\mathfrak D,$
\begin{equation}\label{E16}
\mbox{Var}_{\mu_1}(\pi f)\le c_1\mu_1(|\nn(\pi f)|^2)=\frac{4c_1d}{\mu_2(|\nn\Phi |^2)}\|\mathscr L_0\pi f\|_2^2.
\end{equation}
Furthermore, the fact that $\mu_1(\pi f)=\mu(f)=0$ for $f\in\mathfrak D $ implies that
 for   $f\in\mathfrak D,$
\begin{equation}\label{E15}
\mbox{Var}_{\mu_1}(\pi f)=\mu_1((\pi f)^2)-\mu_1(\pi f)^2=\mu_1((\pi f)^2)=\mu((\pi f)^2)
\end{equation}
by noticing that $\pi f$ is not related to the velocity component and combining $\mu=\mu_1\times\mu_2.$ As a consequence, \eqref{EE2}
is verified by plugging \eqref{E15} back into \eqref{E16}.
\end{proof}

Before starting to examine the Assumption $({\bf H}_4)$, some additional work need to be implemented. The first one is to provide
an explicit expression on the operator $\pi\mathscr L_0^2\pi$, which is involved in the auxiliary operator $B_\lambda .$ To achieve this, we  recall some facts arising from the Assumption (${\bf A}_U$). For any $h\in C_b^\infty(\R^d)$, consider the Poisson equation
\begin{equation}\label{D12}
(I-\mathscr L_{OD})f=h,
\end{equation}
where
\begin{equation}\label{W9}
\big(\mathscr L_{OD}f\big)(x):=\Delta f(x)-\<\nn U(x),\nn f(x)\>,\quad f\in C_b^2(\R^d).
\end{equation}
Under the Assumption (${\bf A}_U$), in terms of \cite[Proposition 4]{CHSG},  \eqref{D12}
has a unique classical solution $f\in C_b^\8(\R^d)$, which can  be expressed explicitly via Green's formula as below:
\begin{equation*}
f(x)=\int_0^\8\e^{-s}\E h(X_s^x)\,\d s.
\end{equation*}
Herein,  $(X_t^x)_{t\ge0}$ is the solution to the  overdamped  Langevin dynamics
\begin{equation*}
\d X_t^x=-\nn U(X_t^x)\,\d t+\ss 2\,\d B_t,\quad\quad X_0^x=x,
\end{equation*}
where  $(B_t)_{t\ge0}$ is a standard $d$-dimensional Brownian motion. Throughout the paper, to emphasize the dependence on $h$, we shall write the solution  $f_h$ in lieu of $f$ to the Poisson equation \eqref{D12}.
The regularity estimates (see e.g. \cite[Lemma 2 and Proposition 5]{CHSG}) on the solution $f_h$ to the Poisson equation \eqref{D12} play a crucial role in the subsequent analysis.

\begin{lemma}\label{lem3.5}
Assume that $C_U,C_\Phi\in(0,\8)$, and suppose that
$\Phi(v)=\psi(|v|^2)$
with $\mu_2(|\nn\Phi|^2+\|\nn^2\Phi\| )<\8$
for some $\psi\in C^2(\R_+;\R_+)$ so that
 \begin{equation}\label{D3}
\lim_{|v|\to\8}|\nn\e^{-\Phi(v)}|=0.
\end{equation}
Then, for any $f\in C^2_b(\R^{2d})$,
\begin{equation}\label{D4}
(\pi\mathscr L_0^2\pi f)(x,v)= c^\star   \big((\mathscr L_{OD} \pi) f\big)(x,v),
\end{equation}
where the operator $\mathscr L_{OD}$ was defined in \eqref{W9}, and
\begin{equation}\label{D8}
c^\star:=\frac{ 2 \omega_d}{C_\Phi}\int_0^\8u^{{d}/{2}}\psi'(u)^2\e^{-\psi(u)}\,\d u
\end{equation}
with $\omega_d$ being the volume of the unit ball in $\R^d.$
\end{lemma}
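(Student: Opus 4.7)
The plan is to compute $\mathscr L_0^2 \pi f$ explicitly and then apply the projection. Since $\pi f$ is a function of $x$ only, $\nn_v(\pi f) = 0$, and hence
\[
(\mathscr L_0 \pi f)(x,v) = \<\nn\Phi(v), \nn_x (\pi f)(x)\>.
\]
Applying $\mathscr L_0$ a second time, the $x$-differentiation of this product produces a Hessian of $\pi f$ contracted twice with $\nn\Phi$, while the $v$-differentiation produces $\nn^2\Phi$ contracted with $\nn U$ and $\nn\pi f$, yielding
\[
(\mathscr L_0^2 \pi f)(x,v) = \sum_{i,j=1}^d \pp_i \Phi(v)\,\pp_j\Phi(v)\,\pp^2_{x_ix_j}(\pi f)(x) - \sum_{i,j=1}^d \pp_j U(x)\,\pp^2_{v_jv_i}\Phi(v)\,\pp_{x_i}(\pi f)(x).
\]

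Next I would integrate against $\mu_2$ in the $v$-variable to implement the outer $\pi$. The radial structure $\Phi(v)=\psi(|v|^2)$ ensures that $\mu_2$ is invariant under orthogonal transformations, so the two matrices
\[
M^{(1)}_{ij} := \int_{\R^d}\pp_i\Phi(v)\,\pp_j\Phi(v)\,\mu_2(\d v), \qquad M^{(2)}_{ij} := \int_{\R^d}\pp^2_{v_jv_i}\Phi(v)\,\mu_2(\d v)
\]
are both scalar multiples of the identity: $M^{(1)} = A\,I$ and $M^{(2)} = B\,I$, with $A = \ff{1}{d}\int_{\R^d}|\nn\Phi(v)|^2\,\mu_2(\d v)$ obtained by taking traces. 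The central step is to establish $A = B$ via integration by parts: using $\pp_i\Phi\,\e^{-\Phi} = -\pp_{v_i}\e^{-\Phi}$, one rewrites
\[
\pp^2_{v_jv_i}\Phi(v)\,\e^{-\Phi(v)} = \pp_{v_j}\bigl(\pp_i\Phi(v)\,\e^{-\Phi(v)}\bigr) + \pp_i\Phi(v)\,\pp_j\Phi(v)\,\e^{-\Phi(v)},
\]
and the integral over $\R^d$ of the first term vanishes provided the boundary flux $\int_{|v|=R}\pp_{v_i}\e^{-\Phi(v)}\,n_j(v)\,\d\sigma(v)$ tends to zero along a suitable sequence of radii. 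This is precisely where hypothesis \eqref{D3} enters, in combination with the integrability $\mu_2(|\nn\Phi|^2 + \|\nn^2\Phi\|) < \8$ that supplies the dominated-convergence control. I expect this boundary-term justification to be the main technical obstacle, to be handled via a cutoff argument selecting a sequence of radii along which the flux decays.

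Once $A = B$ is in hand, substituting back gives
\[
(\pi\mathscr L_0^2\pi f)(x) = A\bigl(\Delta(\pi f)(x) - \<\nn U(x),\nn(\pi f)(x)\>\bigr) = A\,(\mathscr L_{OD}\pi f)(x),
\]
and it only remains to identify $A$ with $c^\star$. This is a direct polar-coordinate computation: from $|\nn\Phi(v)|^2 = 4|v|^2\psi'(|v|^2)^2$, writing $\d v = d\omega_d\,r^{d-1}\,\d r$ and substituting $u = r^2$ yield
\[
A = \ff{1}{d}\int_{\R^d}|\nn\Phi(v)|^2\,\mu_2(\d v) = \ff{2\omega_d}{C_\Phi}\int_0^\8 u^{d/2}\psi'(u)^2\,\e^{-\psi(u)}\,\d u = c^\star,
\]
which completes \eqref{D4}.
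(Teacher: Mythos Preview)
Your proposal is correct and follows essentially the same route as the paper: compute $\mathscr L_0^2\pi f$ explicitly, use rotational invariance of $\mu_2$ to reduce the two $v$-integrals to scalars, and then show via integration by parts (with \eqref{D3} killing the boundary term) that these two scalars coincide, finally identifying the common value with $c^\star$ by polar coordinates. The only cosmetic difference is that you package the two $v$-integrals as matrices $M^{(1)},M^{(2)}$ and argue they are scalar multiples of the identity, whereas the paper expands $\nn\Phi$ and $\nn^2\Phi$ in terms of $\psi',\psi''$ from the outset and works componentwise; the underlying integration-by-parts identity and the use of \eqref{D3} are identical.
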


\begin{proof}
According to the definition of the operator $\mathscr L_0$, we have
\begin{equation*}
(\pi\mathscr L_0^2\pi f)(x)=\int_{\R^d}\big(\<\nn \Phi(v),\nn ^2(\pi f)(x)\nn \Phi(v) \>-\<\nn U(x),\nn^2 \Phi(v)\nn (\pi f)(x)\>\big)\,\mu_2(\d v).
\end{equation*}
Note from  $\Phi(v)=\psi(|v|^2)$ that
\begin{equation}\label{D6}
\nn\Phi(v)=2\psi'(|v|^2)v,\quad \nn^2\Phi(v)=2\big(\psi'(|v|^2)\I_{d\times d}+2\psi''(|v|^2)\,v\otimes v\big).
\end{equation}
Thus, we deduce that
\begin{equation*}
\begin{split}
(\pi\mathscr L_0^2\pi f)(x)
&=4\sum_{i,j=1}^d\int_{\R^d}\psi'(|v|^2)^2v_iv_j \,\mu_2(\d v)\,\big( \nn ^2(\pi f)\big)_{ij}(x)\\
&\quad-4\sum_{i,j=1}^d\int_{\R^d} \psi''(|v|^2)v_iv_j \,\mu_2(\d v)\,\big(\nn U\big)_i(x)   \big(\nn (\pi f)\big)_j(x)\\
&\quad-2\int_{\R^d} \psi'(|v|^2) \,\mu_2(\d v)\,\<\nn U(x),   \nn (\pi f)(x)\>.
\end{split}
\end{equation*}
Furthermore, taking the radial properties of $h_1(v)=h_1(|v|):=\psi'(|v|^2)$ and $h_2(v)=h_2(|v|):=\psi''(|v|^2)$ into account, and utilizing  the rotationally invariant property of the measure $\mu_2$
 as well as $\mu_2(|\nn\Phi|^2+\|\nn^2\Phi\| )<\8$
 further yields
\begin{align*}
(\pi\mathscr L_0^2\pi f)(x)&=4\sum_{i =1}^d\int_{\R^d}\psi'(|v|^2)^2 v_i^2 \,\mu_2(\d v)\,\big( \nn ^2(\pi f)\big)_{ii}(x)\\
&\quad-4\sum_{i=1}^d\int_{\R^d}\psi''(|v|^2)v_i^2\,\mu_2(\d v)\,\big(\nn U\big)_i(x) \big(\nn (\pi f)\big)_i(x)\\
&\quad-2\int_{\R^d}\psi'(|v|^2) \,\mu_2(\d v)\,\<\nn U(x),   \nn (\pi f)(x)\>\\
&=\frac{4}{d}\int_{\R^d}\psi'(|v|^2)^2 |v|^2 \,\mu_2(\d v)\,\Delta(\pi f)(x)\\
&\quad -2 \int_{\R^d} \Big( \frac{2}{d} \psi''(|v|^2)|v|^2 + \psi'(|v|^2) \Big) \,\mu_2(\d v)  \<\nn U(x), \nn (\pi f)(x)\>.
\end{align*}
Therefore, to achieve \eqref{D4}, it is sufficient to verify
\begin{equation}\label{D5}
2 \int_{\R^d} \Big( \frac{2}{d} \psi''(|v|^2)|v|^2 + \psi'(|v|^2) \Big) \,\mu_2(\d v)=\frac{4}{d}\int_{\R^d}\psi'(|v|^2)^2 |v|^2 \,\mu_2(\d v)=c^\star<\8,
\end{equation}
where $c^\star>0$ was introduced in \eqref{D8}.

By invoking Jacobi's transformation formula, we obtain from $\mu(|\nn\Phi|^2)<\8$ that
\begin{equation*}
\frac{4}{d}\int_{\R^d}\psi'(|v|^2)^2 |v|^2 \,\mu_2(\d v)=\frac{ 4 \omega_d}{C_\Phi}\int_0^\8r^{d+1}\psi'(r^2)^2   \e^{-\psi(r^2)}\,\d r=\frac{ 2 \omega_d}{C_\Phi}\int_0^\8r^{{d }/{2} }\psi'(r )^2   \e^{-\psi(r )}\,\d r<\8.
\end{equation*}
Hence, the second equality in \eqref{D5} is provable. On the other hand,
by the integration by parts formula, it follows from \eqref{D3}   that
\begin{equation*}
\begin{split}
4\int_{\R^d}\psi'(|v|^2)^2 |v|^2 \,\mu_2(\d v)&=-\frac{1}{C_\Phi}\int_{\R^d}\<\nn\psi(|v|^2),\nn\e^{-\psi(|v|^2)}\>\,\d v\\
&=\frac{1}{C_\Phi}\int_{\R^d}\e^{-\psi(|v|^2)}{\rm trace}\big(\nn^2\psi(|v|^2)\big)\,\d v\\
&=\frac{2}{C_\Phi}\int_{\R^d}\e^{-\psi(|v|^2)}\big(d\psi'(|v|^2) +2\psi''(|v|^2)|v|^2\big)\,\d v,
\end{split}
\end{equation*}
where the last display is due to \eqref{D6}. Consequently, the first equality of \eqref{D5} is verifiable. Thus, the proof is complete.
\end{proof}

\begin{lemma}\label{lem3.7}
Assume that $({\bf A}_U)$ and  Assumptions in Lemma $\ref{lem3.5}$ hold, and suppose further that
\begin{equation}\label{W18}
\mu_2\big(|\nn\Phi|^4+\|\nn^2\Phi\|^2\big)<\8.
\end{equation}
  Then,
there exists a constant $c >0$ such that for all $f\in \mathfrak D,$
\begin{equation}\label{W23}
\|(B_{c^\star}\mathscr L_0(I-\pi))^*\pi f\|_2\le c\, \|\pi f\|_2,
\end{equation}
where $c^\star>0$ was defined in \eqref{D8}.
\end{lemma}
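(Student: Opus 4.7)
The strategy is to identify $\bar B_{c^\star}^{*}\pi f$ explicitly as a gradient of the solution to the Poisson equation \eqref{D12} for the overdamped Langevin generator $\mathscr L_{OD}$, and then to reduce the stated estimate to standard regularity bounds on that equation, which is where Assumption $(\mathbf A_U)$ enters through \cite[Lemma~2 and Proposition~5]{CHSG}. Since $\mathscr L_0^*=-\mathscr L_0$ one has $(\mathscr L_0\pi)^*=-\pi\mathscr L_0$, and hence $G=-\pi\mathscr L_0^2\pi$. By Lemma~\ref{lem3.5}, on functions depending only on $x$ this reduces to $G=-c^\star\mathscr L_{OD}$. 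Decomposing $g=\pi g+(I-\pi)g$ and using that $Gu\in H_0$ and that $G$ annihilates $H_0^\perp$, one sees that the unique $g$ solving $(c^\star I+G)g=\pi f$ must lie in $H_0$ and must satisfy $(I-\mathscr L_{OD})g=\pi f/c^\star$. Therefore
\[
(c^\star I+G)^{-1}\pi f=\frac{1}{c^\star}\,f_{\pi f},
\]
where $f_{\pi f}\in C_b^\infty(\R^d)$ is the classical solution of \eqref{D12} with right-hand side $\pi f$ provided by $(\mathbf A_U)$. Taking the adjoint of $\bar B_{c^\star}=(c^\star I+G)^{-1}(\mathscr L_0\pi)^*$, using self-adjointness of $(c^\star I+G)^{-1}$ and $(\mathscr L_0\pi)^{**}=\mathscr L_0\pi$, and invoking that $\pi f_{\pi f}=f_{\pi f}$, I obtain
\[
\bar B_{c^\star}^{*}\pi f=\frac{1}{c^\star}\,\mathscr L_0 f_{\pi f}=\frac{1}{c^\star}\<\nn\Phi(v),\nn f_{\pi f}(x)\>.
\]

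Next, combining this with $(I-\pi)^*=I-\pi$ and $\mathscr L_0^*=-\mathscr L_0$ yields the key identity
\[
(\bar B_{c^\star}\mathscr L_0(I-\pi))^{*}\pi f=-\frac{1}{c^\star}(I-\pi)\mathscr L_0^2 f_{\pi f}.
\]
Because $f_{\pi f}$ depends only on $x$, the chain rule gives the pointwise formula
\[
(\mathscr L_0^2 f_{\pi f})(x,v)=\<\nn\Phi(v),\nn^2 f_{\pi f}(x)\nn\Phi(v)\>-\<\nn U(x),\nn^2\Phi(v)\nn f_{\pi f}(x)\>.
\]
Using $\|(I-\pi)u\|_2\le\|u\|_2$, the product structure $\mu=\mu_1\times\mu_2$, and Cauchy--Schwarz, the required bound \eqref{W23} then reduces to
\[
\mu_2(|\nn\Phi|^4)\,\mu_1(\|\nn^2 f_{\pi f}\|^2)+\mu_2(\|\nn^2\Phi\|^2)\,\mu_1(|\nn U|^2|\nn f_{\pi f}|^2)\le C\|\pi f\|_2^2.
\]
The two $v$-factors are finite by the additional moment condition \eqref{W18}.

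Finally, under $(\mathbf A_U)$---in particular the growth bound \eqref{D10}---the regularity estimates \cite[Lemma~2 and Proposition~5]{CHSG} for the Poisson equation $(I-\mathscr L_{OD})f_h=h$ provide a constant $C$ independent of $h\in L^2(\mu_1)$ such that
\[
\mu_1(\|\nn^2 f_h\|^2)+\mu_1(|\nn U|^2|\nn f_h|^2)\le C\|h\|_{L^2(\mu_1)}^2.
\]
Specialising to $h=\pi f$ and noting $\|\pi f\|_{L^2(\mu_1)}=\|\pi f\|_2$ finishes the argument. The main technical hurdle is the weighted inequality $\mu_1(|\nn U|^2|\nn f_h|^2)\lesssim\|h\|_{L^2(\mu_1)}^2$: the unbounded weight $|\nn U|^2$ is controlled only by exploiting \eqref{D10} through iterated integration by parts against the Poisson equation, which is precisely the content of \cite{CHSG}. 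Everything else is operator algebra and an application of Cauchy--Schwarz.
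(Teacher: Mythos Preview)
Your proposal is correct and follows essentially the same route as the paper: both identify $(B_{c^\star}\mathscr L_0(I-\pi))^{*}\pi f=-\tfrac{1}{c^\star}(I-\pi)\mathscr L_0^2 u$ with $u=f_{\pi f}$ the solution of the Poisson equation $(I-\mathscr L_{OD})u=\pi f$, compute $\mathscr L_0^2 u$ explicitly, and reduce to the $L^2(\mu_1)$-regularity estimates for $\nn^2 u$ and $|\nn U|\,|\nn u|$ from \cite{CHSG}. The only cosmetic difference is that the paper subtracts the $\mu_2$-mean and estimates the centred quantity, whereas you use the cruder bound $\|(I-\pi)u\|_2\le\|u\|_2$; and the paper is slightly more explicit that the weighted bound $\mu_1(|\nn U|^2|\nn u|^2)\lesssim\|\pi f\|_2^2$ is obtained by first reducing (via \eqref{D10}) to $\mu_1(\|\nn^2 u\|^2)+\mu_1(|\nn u|^2)$ and then invoking \cite[Proposition~5]{CHSG} together with \cite[Corollary~30]{ADNR}, rather than Lemma~2 of \cite{CHSG} alone.
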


\begin{proof}
According to the definition of $B_\lambda$ and by virtue of the $L^2(\mu)$-antisymmetric property of $\mathscr L_0$, it follows readily from Lemma \ref{lem3.5} that for all $f\in \mathfrak D,$
\begin{equation}\label{W15}
\begin{split}
 (B_{c^\star}\mathscr L_0(I-\pi))^* \pi  f (x,v)&=-(I-\pi)\mathscr L_0^2\pi (c^\star I-\pi\mathscr L_0^2\pi)^{-1}\pi f (x,v)\\
&=-\frac{1}{c^\star}(I-\pi)\mathscr L_0^2\pi u (x,v)=-\frac{1}{c^\star}(I-\pi)\mathscr L_0^2 u (x,v),
\end{split}
\end{equation}
 where $u(x,v ):= (I-\mathscr L_{OD}\pi)^{-1}   \pi f  (x,v)$.
Note that $u(x,v)$ depends merely on  the $x$-variable  so we can write $u(x)=u(x,v)$ in what follows.
According to \cite[Proposition 4]{CHSG}, $u\in C_b^\8(\R^d)$ under the Assumption $({\bf A}_U)$.
By examining   the line to derive \eqref{D4}, we have
\begin{equation}\label{W0}
\int_{ \R^d}\<\nn\Phi(v),\nn^2u(x)\nn\Phi(v)\>\,\mu_2(\d v)=c^\star\Delta u(x)
\end{equation}
and
\begin{equation*}
\int_{ \R^d}\<\nn U(x),\nn^2\Phi(v)\nn u(x)\>\,\mu_2(\d v)=c^\star \<\nn U(x),\nn u(x)\>.
\end{equation*}
Then,  along with   the definition of $\mathscr L_0$, we infer from \eqref{W15} that
\begin{equation*}
\begin{split}
& (B_{c^\star}\mathscr L_0(I-\pi))^* \pi  f (x,v)\\
&=-\frac{1}{c^\star}\bigg(\<\nn\Phi(v),\nn^2u(x)\nn\Phi(v)\>-\<\nn U(x),\nn^2\Phi(v)\nn u(x)\>\\
&\qquad\qquad-\int_{\R^d}\big(\<\nn\Phi(v),\nn^2u(x)\nn\Phi(v)\>-\<\nn U(x),\nn^2\Phi(v)\nn u(x)\>\big)\,\mu_2(\d v)\bigg)\\
&=-\frac{1}{c^\star}\big(\<\nn\Phi(v),\nn^2u(x)\nn\Phi(v)\>-c^\star\Delta u(x)\\
&\qquad\quad\,\,+c^\star\<\nn U(x),\nn u(x)\>-\<\nn U(x),\nn^2\Phi(v)\nn u(x)\>\big).
\end{split}
\end{equation*}
Subsequently,  in addition to \eqref{W18} and the basic inequality: $2ab\le a^2+b^2$ for all $a,b\ge0$,   we find  that for some constants $C_1,C_2>0,$
\begin{equation}\label{W20}
\begin{split}
 \|(B_{c^\star}\mathscr L_0(I-\pi))^* \pi  f\|_2^2&\le\frac{2}{(c^\star)^2}\mu_1(\varphi)  + 4 \Big(1+\frac{1}{(c^\star)^2}\mu_2\big(\|\nn^2\Phi\|^2\big)\Big)\mu_1\big(  |\nn U |^2|\nn u |^2\big )\\
 &\le C_1\mu_1(\varphi)+C_2\mu_1\big(  |\nn U |^2|\nn u |^2\big ),
 \end{split}
\end{equation}
where for all $x\in\R^d,$
\begin{equation*}
  \varphi(x):=\int_{\R^d}\big(\<\nn\Phi(v),\nn^2u(x)\nn\Phi(v)\>-c^\star\Delta u(x)\big)^2\,\mu_2 (\d v).
\end{equation*}
Recall that $\Phi(v)=\psi(|v|^2)$ and $\mu_2(\d v)=\e^{-\Phi(v)}\,\d v$. Thus,
\eqref{W18} and \eqref{W0}  yield   that
\begin{align*}
  \varphi(x)&=\int_{ \R^d}\<\nn\Phi(v),\nn^2u(x)\nn\Phi(v)\>^2\,\mu_2(\d v)-(c^\star\Delta u(x))^2\\
&=4\sum_{i,j,k,\ell=1}^d\int_{ \R^d}\psi'(|v|^2)^4v_iv_jv_kv_\ell\,\mu_2(\d v)\,\partial_{ij}u(x)\partial_{k \ell}u(x) -(c^\star\Delta u(x))^2\\
&=4\bigg(\sum_{ i, j=1}^d\int_{ \R^d}\psi'(|v|^2)^4v_i^2v_j^2\,\mu_2(\d v)\big(\partial_{ii}u(x)\partial_{jj}u(x)+2(\partial_{ij}u(x))^2\big)\\
&\quad\quad-2\sum_{i=1}^d\int_{ \R^d}\psi'(|v|^2)^4v_i^4 \,\mu_2(\d v)\,(\partial_{ii}u(x))^2\bigg)-(c^\star\Delta u(x))^2\\
&\le 2 \sum_{ i, j=1}^d\int_{ \R^d}\psi'(|v|^2)^4(v_i^4+v_j^4)\,\mu_2(\d v)\big(\partial_{ii}u(x)\partial_{jj}u(x)+2(\partial_{ij}u(x))^2\big),
\end{align*}
where $\partial_{ij} :=\frac{\partial^2}{\partial x_i\partial x_j}$. The quantitative estimate above, besides \eqref{W18},   implies that
\begin{align*}
  \varphi(x)
 \le  \frac{1}{4}  \mu_2(|\nn \Phi|^4)\big((\Delta u(x))^2+2\|\nn^2u(x)\|_{\rm HS}^2\big) \le C_3\|\nn^2u(x)\|^2
\end{align*}
for some constant $C_3>0,$ in which $\|\cdot\|_{\rm HS}$ represents the Hilbert-Schmidt norm. Accordingly,
by applying \cite[Proposition 5]{CHSG}, there exists a constant $C_4>0$ such that
\begin{equation}\label{W21}
\mu_1(\varphi)\le C_4\|\pi f\|_2^2.
\end{equation}

To handle the term $\mu_1\big(  |\nn U |^2|\nn u |^2\big )$, note from \cite[p.\ 1027; line -7]{CHSG} that there exist constants $C_5,C_6>0$ such that
\begin{equation*}
\mu_1\big(  |\nn U |^2|\nn u |^2\big )\le C_5\mu_1(\|\nn^2u\|^2)+C_6\mu_1(|\nn u|^2).
\end{equation*}
Thus, by invoking \cite[Proposition 5]{CHSG} and taking advantage of \cite[Corollary 30]{ADNR} (also see the arguments in \cite[p.\ 1027-1028]{CHSG} for \cite[Proposition 5]{CHSG}), there exists a constant $C_7>0$ such that
\begin{equation}\label{W22}
\mu_1\big(  |\nn U |^2|\nn u |^2\big )\le C_7\|\pi f\|_2^2.
\end{equation}

At length, the assertion \eqref{W23} is available by plugging \eqref{W21} and \eqref{W22} back into \eqref{W20}.
\end{proof}

In order to verify the Assumption {\bf(H4)}, we further need the following two lemmas.

\begin{lemma}\label{lem3.8}
Assume that $({\bf A}_U)$ and assumptions in Lemma $\ref{lem3.5}$ hold. Then, for any $f\in \mathfrak D$ and $d>2-\alpha,$
\begin{equation}\label{W27}
\begin{split}
\mathscr L_1^* B^*_{c^\star}\pi f (x,v)&=\frac{1}{c^\star} C_{d,2-\alpha}(d+\alpha-2)\e^{\Phi(v)}\,\mbox{\rm p.v.}\int_{\R^d}\frac{ \<v-\bar v,\nn^2\Phi(v)\nn u_{\pi f}(x)\>\e^{-\Phi(\bar v)}}{|v-\bar v|^{d+\alpha}}\,\d \bar v\\
&\quad -\frac{1}{c^\star}c_{d,\alpha}\e^{ \Phi(v)}\,\mbox{\rm p.v.}\int_{\R^d }\frac{ \<\nn \Phi(v)-\nn \Phi(\bar v),\nn u_{\pi f}(x)\>  \e^{-\Phi(\bar v)} }{|v-\bar v|^{d+\alpha}}\d \bar v,
\end{split}
\end{equation}
where
\begin{equation}\label{W25}
 C_{d,\alpha}:=\Gamma((d-\alpha)/2)/(2^\alpha\pi^{d/2}\Gamma(\alpha/2)),\quad
 c_{d,\alpha}:=2^\alpha\Gamma((d+\alpha)/2)/(\pi^{d/2}|\Gamma(-\alpha/2)|).
\end{equation}
\end{lemma}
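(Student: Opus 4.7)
The plan is two-fold: first, compute $B_{c^\star}^*\pi f$ in closed form, and second, apply $\mathscr L_1^*$ via the explicit expression \eqref{W10} and reduce everything to principal-value integrals.

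For the first step, the key input is Lemma \ref{lem3.5}: since $G=(\mathscr L_0\pi)^*\mathscr L_0\pi=-\pi\mathscr L_0^2\pi$, the restriction of $c^\star I+G$ to $H_0$ coincides with $c^\star(I-\mathscr L_{OD})$, so that $(c^\star I+G)^{-1}\pi f=\tfrac{1}{c^\star}u_{\pi f}$. Combining this with the Pedersen-type intertwining of $\mathscr L_0\pi$ with the resolvent of $G$---exactly the manipulation underlying the first equality of the display in Lemma \ref{lem3.7}---produces
\begin{equation*}
B_{c^\star}^*\pi f(x,v)=\tfrac{1}{c^\star}\mathscr L_0 u_{\pi f}(x,v)=\tfrac{1}{c^\star}\<\nn\Phi(v),\nn u_{\pi f}(x)\>=:\tfrac{1}{c^\star}g(x,v),
\end{equation*}
where the middle equality uses that $u_{\pi f}$ depends only on $x$.

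For the second step, I apply $\mathscr L_1^*$ to $g$. Setting $w(v):=(-\Delta)^{(\alpha-2)/2}\e^{-\Phi(v)}$, expanding the divergence in \eqref{W10} via $\mbox{div}_v(g\,\nn w)=\<\nn_v g,\nn w\>+g\,\Delta_v w$, and substituting $b_\Phi=\e^{\Phi}\nn w$ together with the identity $\Delta_v w=-(-\Delta)^{\alpha/2}\e^{-\Phi}$ from \eqref{W8} gives
\begin{equation*}
\mathscr L_1^*g=-\<b_\Phi(v),\nn^2\Phi(v)\nn u_{\pi f}(x)\>+\e^{\Phi(v)}\bigl[g\,(-\Delta)^{\alpha/2}\e^{-\Phi(v)}-(-\Delta_v)^{\alpha/2}(g\,\e^{-\Phi(v)})\bigr],
\end{equation*}
after using $\nn_v g(x,v)=\nn^2\Phi(v)\nn u_{\pi f}(x)$. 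Because $d>2-\alpha$, the Riesz representation $w(v)=C_{d,2-\alpha}\int|v-\bar v|^{-(d+\alpha-2)}\e^{-\Phi(\bar v)}\,\d\bar v$ differentiated under the integral yields $b_\Phi(v)=-(d+\alpha-2)C_{d,2-\alpha}\e^{\Phi(v)}\int(v-\bar v)|v-\bar v|^{-(d+\alpha)}\e^{-\Phi(\bar v)}\,\d\bar v$, which recovers the first integral of \eqref{W27} after multiplication by $1/c^\star$. For the bracketed nonlocal difference, the singular-integral definition of $(-\Delta)^{\alpha/2}$ causes the diagonal $g(x,v)\e^{-\Phi(v)}$ contributions to cancel, leaving $-c_{d,\alpha}\e^{\Phi(v)}\mbox{p.v.}\int(g(x,v)-g(x,\bar v))|v-\bar v|^{-(d+\alpha)}\e^{-\Phi(\bar v)}\,\d\bar v$; substituting $g(x,v)-g(x,\bar v)=\<\nn\Phi(v)-\nn\Phi(\bar v),\nn u_{\pi f}(x)\>$ then delivers the second integral.

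The main obstacles are bookkeeping rather than deep analysis: rigorously intertwining $\mathscr L_0\pi$ with the resolvent of $G$ to pass from $\pi f$ to $u_{\pi f}$ in the first step, and justifying the manipulation of the principal-value integrals and the interchange with multiplication by $\e^{\Phi(v)}$ in the second. Both are controlled by the regularity estimates on $u_{\pi f}=f_{\pi f}$ imported from \cite{CHSG} together with the integrability and growth conditions in $({\bf A}_U)$ and $({\bf A}_\Phi)$; the hypothesis $d>2-\alpha$ enters precisely so that the Riesz kernel $|v|^{-(d+\alpha-2)}$ is locally integrable and the Riesz potential of $\e^{-\Phi}$ is finite and differentiable under the integral.
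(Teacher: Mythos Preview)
Your proposal is correct and follows essentially the same route as the paper: identify $B_{c^\star}^*\pi f=\tfrac{1}{c^\star}\mathscr L_0 u_{\pi f}=\tfrac{1}{c^\star}\<\nn\Phi(v),\nn u_{\pi f}(x)\>$ via Lemma \ref{lem3.5} and the duality $\<B_{c^\star}f,g\>_2=\tfrac{1}{c^\star}\<f,\mathscr L_0 u_{\pi g}\>_2$, then expand $\mathscr L_1^*$ via \eqref{W10} using the chain rule for the divergence together with \eqref{W8}, obtaining exactly the paper's split $\phi_1+\phi_2$; the paper invokes \cite[Theorem 1.1]{K} for the Riesz-potential derivative in $\phi_1$ and the singular-integral definition of $(-\Delta)^{\alpha/2}$ for $\phi_2$, precisely as you do.
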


\begin{proof}
In view of  $\pi^*=\pi$, $\mathscr L_0^*=-\mathscr L_0$ as well as   $\pi B_{c^\star} =B_{c^\star} $, we deduce from Lemma \ref{lem3.5} that
for any $f,g\in \mathfrak D,$
\begin{equation}\label{W24}
\begin{split}
\<B_{c^\star}f,g\>_2&= \<\pi  \mathscr L_0^* f,({c^\star}I-\pi  \mathscr L_0^2\pi)^{-1}\pi g\>_2\\
&= \frac{1}{{c^\star}}\<   \mathscr L_0^* f,(I-\mathscr L_{  OD})^{-1}\pi g\>_2,
\end{split}
\end{equation}
where in the second identity we also used the fact that $({c^\star}I-\pi  \mathscr L_0^2\pi)^{-1}\pi g$ is independent of the $v$-variable.
For $g\in \mathfrak D$, in terms of \cite[Proposition 4]{CHSG}, the Poisson equation
\begin{equation*}
(I-\mathscr L_{OD})u_{\pi g} =\pi g
\end{equation*}
has a unique classical solution $u_{\pi g}\in C_b^\8(\R^d)$.  Whereafter, we infer from \eqref{W24} that
\begin{equation*}
\<B_{c^\star}f,g\>_2
 = \frac{1}{{c^\star}}\<     f,\mathscr L_0u_{\pi g}\>_2.
\end{equation*}
 This, combining with the definition of $\mathscr L_0$, leads to
\begin{equation*}
B_{c^\star}^*f(x,v)=\frac{1}{c^\star}\<\nn \Phi(v),\nn u_{\pi f}(x)\>,\qquad f\in\mathfrak D.
\end{equation*}
Next, employing \eqref{W8} and \eqref{W10}, in addition to   the chain rule,   yields
\begin{equation}\label{W26}
\begin{split}
\mathscr L_1^*B_{c^\star}^*\pi f (x,v)=&- \frac{1}{c^\star}\e^{\Phi(v)}\mbox{div}_v\big(\<\nn\Phi(v),\nn u_{\pi f}(x)\>\nn \big((-\Delta)^{{\alpha}/{2}-1}\e^{-\Phi(v)}\big)\big) \\
&-\frac{1}{c^\star}\e^{\Phi(v)}(-\Delta_v  )^{{\alpha}/{2} }\big(\<\nn \Phi(v),\nn u_{\pi f}(x)\>\e^{-\Phi(v)}\big)\\
=&-  \frac{1}{c^\star}\e^{\Phi(v)} \big\<\nn^2\Phi(v)\nn u_{\pi f}(x), \nn  \big((-\Delta )^{{\alpha}/{2}-1}\e^{-\Phi(v)}\big)\big\> \\
&+ \frac{1}{c^\star}\e^{\Phi(v)}\big(\<\nn \Phi(v),\nn u_{\pi f}(x)\>(-\Delta)^{{\alpha}/{2}}\e^{-\Phi(v)}\\
&\qquad\quad\qquad- (-\Delta_v )^{{\alpha}/{2}}\big(\<\nn \Phi(v),\nn u_{\pi f}(x)\>\e^{-\Phi(v)}\big)\big)\\
=:&\phi_1(x,v)+\phi_2(x,v).
\end{split}
\end{equation}
Owing to  $d>2-\alpha$,
 it follows from \cite[Theorem 1.1]{K} that
\begin{equation*}
\begin{split}
\phi_1(x,v)
&= \frac{1}{c^\star} C_{d,2-\alpha}(d+\alpha-2)\e^{\Phi(v)}\,\mbox{p.v.}\int_{\R^d}\frac{ \<v-\bar v,\nn^2\Phi(v)\nn u_{\pi f}(x)\>\e^{-\Phi(\bar v)}}{|v-\bar v|^{d+\alpha}}\,\d \bar v,
\end{split}
\end{equation*}
and
\begin{equation*}
\begin{split}
\phi_2(x,v)= & \frac{1}{c^\star}c_{d,\alpha}\e^{\Phi(v)} \,\mbox{p.v.}\int_{\R^d }\frac{\<\nn \Phi(v),\nn u_{\pi f}(x)\>(\e^{-\Phi(v)} -\e^{-\Phi(\bar v)})}{|v-\bar v|^{d+\alpha}}\d \bar v\\
&-\frac{1}{c^\star}c_{d,\alpha}\e^{\Phi(v)} \,\mbox{p.v.}\int_{\R^d }\frac{\<\nn \Phi(v),\nn u_{\pi f}(x)\>\e^{-\Phi(v)}-\<\nn \Phi(\bar v),\nn u_{\pi f}(x)\>\e^{-\Phi(\bar v)}}{|v-\bar v|^{d+\alpha}}\d \bar v\\
=&-\frac{1}{c^\star}c_{d,\alpha}\e^{ \Phi(v)}\,\mbox{p.v.}\int_{\R^d }\frac{ \<\nn \Phi(v)-\nn \Phi(\bar v),\nn u_{\pi f}(x)\>  \e^{-\Phi(\bar v)} }{|v-\bar v|^{d+\alpha}}\d \bar v,
\end{split}
\end{equation*}
where $C_{d,2-\alpha}$ and $c_{d,\alpha}$ were defined in \eqref{W25}. Thus, substituting the explicit expressions above on $\phi_1$ and $\phi_2$
into \eqref{W26} yields the desired assertion \eqref{W27}.
\end{proof}

\begin{lemma}\label{lem3.6}
Assume that $\Psi\in C^3(\R^d;\R_+)$ such that $C_\Psi:=\int_{\R^d}  \e^{-\Psi(u)}\,\d u<\8$ and $\|\nn \Psi\|_\8<\8$. For $\beta,\gamma>0$ and $v,y\in \R^d$, set
\begin{equation}\label{W19}
\begin{split}
\Psi_{\beta,\gamma}(v, y):& =\beta\e^{\Psi(v)}\,{\rm p.v. }\int_{\R^d}\frac{ \<u,\nn^2\Psi(v)  y\>\e^{-\Psi(  v-u)}}{|u|^{d+\alpha}}\,\d u \\
&\quad  -\gamma\e^{ \Psi(v)}\,{\rm p.v. }\int_{\R^d }\frac{ \<\nn \Psi(v)-\nn \Psi( v-u), y\>  \e^{-\Psi(  v-u)} }{|u|^{d+\alpha}}\d u.
\end{split}
\end{equation}
Then,
$\Psi_{\beta,\gamma}(v, y)$ is well defined so that for any $v,y\in \R^d$,
$|\Psi_{\beta,\gamma}(v, y)|\le \Theta(v)|y|$, where $v\mapsto \Theta(v)$ is positive and locally bounded on $\R^d$.
Assume further  that    $\Psi\in C^3(\R^d;\R_+)$ is a  radial function   so that
$|v|\mapsto \Psi(v)=\Psi(|v|)$ is non-decreasing
and  there exist constants $c^*,v^*>0$ such that for all $v\in\R^d$ with $|v|\ge v^*,$
\begin{equation}\label{W13}
 \sup_{u\in B_1(v)}\|\nn^i\Psi (u)\|\le c^*\|\nn^i\Psi (v)\|,\quad i=1,2,3.
\end{equation}
Then,
there exists a constant $c_0>0$ such that for all $v\in\R^d$ with $|v|\ge v^*$ and $y\in\R^d$,
\begin{equation}\label{W14}
\begin{split}
|\Psi_{\beta,\gamma}(v, y)|\le c_0 \tilde\Psi_{\beta,\gamma}(v)|y|
\end{split}
\end{equation}
where
\begin{align*}\tilde\Psi_{\beta,\gamma}(v):= &\big\|\nn^2\Psi(v)\big\|\e^{\Psi(v)}\Big[ \e^{-\Psi(v)}|\nn\Psi(v)|+  |v|^{-(d+\alpha-1)}  \\
&\qquad\qquad\qquad\quad+ (\I_{\{\alpha\in(1,2)\}}+\I_{\{\alpha=1\}}\log|v|+|v|^{1-\alpha}\I_{\{\alpha\in (0,1)\}})\,\e^{-\Psi(v/2)}\Big]\\
&+  |\nn\Psi(v)|\,\|\nn^2\Psi(v) \|+  \|\nn^3\Psi(v )  \| +\e^{ \Psi(v)}\big( |v|^{-(d+\alpha)}   +\e^{-\Psi(v/2)} \big).\end{align*}

In particular, if  \begin{equation}\label{D7-}\sup_{v\in \R^d} \left(\|\nabla^2\Psi(v)\||v|\right)<\infty,\quad  \|\nn^3\Psi\|_\8<\8, \quad \sup_{v\in \R^d} |\Psi(v)-\Psi(v/2)|<\infty\end{equation} and the integrability
\begin{equation}\label{D7--}\int_{\R^d}
\frac{\e^{\Psi(v)}}{(1+|v|)^{2(d+\alpha)}}\,\d v<\8\end{equation} hold respectively,  then $\mu_2(\Theta^2)<\8$.
\end{lemma}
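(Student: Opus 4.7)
The plan is to write $\Psi_{\beta,\gamma}(v,y)=I_1(v,y)+I_2(v,y)$ according to the two principal value integrals in \eqref{W19}, and to estimate each by splitting the $u$--integration into annuli whose radii depend on $|v|$. Two tools drive the argument: (i) the oddness identity $\int_{|u|\le r}|u|^{-(d+\alpha)}\<u,w\>\,\d u=0$, which kills the leading singular contribution near the origin, and (ii) radial monotonicity of $\Psi$, which gives $\e^{-\Psi(v-u)}\le\e^{-\Psi(v/2)}$ as soon as $|u|\le |v|/2$.

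To establish well-definedness and the basic bound $|\Psi_{\beta,\gamma}(v,y)|\le\Theta(v)|y|$ with $\Theta$ locally bounded, I first split at $|u|=1$. On $\{|u|\le 1\}$, writing $\e^{-\Psi(v-u)}=\e^{-\Psi(v)}+[\e^{-\Psi(v-u)}-\e^{-\Psi(v)}]$ inside $I_1$, the first piece vanishes by oddness and the difference is bounded by $C|u|\cdot\sup_{|w-v|\le1}(|\nn\Psi(w)|\e^{-\Psi(w)})$ via $\|\nn\Psi\|_\8<\8$, so the resulting integrand is of size $|u|^{2-d-\alpha}$, locally integrable since $\alpha<2$; for $I_2$, the same trick applied to $\nn\Psi(v)-\nn\Psi(v-u)=\nn^2\Psi(v)u+O(|u|^2)$ gives an integrand of the same order. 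On $\{|u|>1\}$ the two integrands decay like $|u|^{-(d+\alpha-1)}$ and $|u|^{-(d+\alpha)}$ respectively; the change of variable $w=v-u$ bounds each by $\e^{\Psi(v)}$ times a continuous-in-$v$ factor involving $C_\Psi$. Collecting bounds produces the locally bounded $\Theta$.

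For the sharp bound \eqref{W14} with $|v|\ge v^*$, I refine the split into three annuli $A_1:=\{|u|\le1\}$, $A_2:=\{1<|u|\le|v|/2\}$, $A_3:=\{|u|>|v|/2\}$. On $A_1$, hypothesis \eqref{W13} gives $\|\nn^i\Psi(v-u)\|\le c^*\|\nn^i\Psi(v)\|$ for $i=1,2,3$; after the Taylor cancellations above, the $A_1$ contributions are $C\|\nn^2\Psi(v)\||\nn\Psi(v)|\,|y|$ for $I_1$ and $C(|\nn\Psi(v)|\|\nn^2\Psi(v)\|+\|\nn^3\Psi(v)\|)|y|$ for $I_2$, matching the first bracketed term and the two ``bare'' terms of $\tilde\Psi_{\beta,\gamma}(v)$. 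On $A_2$, radial monotonicity gives $\e^{-\Psi(v-u)}\le\e^{-\Psi(v/2)}$; the $u$--integrals reduce to $\int_1^{|v|/2}r^{-\alpha}\,\d r$ for $I_1$ with prefactor $\e^{\Psi(v)}\|\nn^2\Psi(v)\|\e^{-\Psi(v/2)}$, producing the $\alpha$--trichotomy $\I_{\{\alpha\in(1,2)\}}+\I_{\{\alpha=1\}}\log|v|+|v|^{1-\alpha}\I_{\{\alpha\in(0,1)\}}$, and to an $O(1)$ integral for $I_2$ with prefactor $\e^{\Psi(v)}\e^{-\Psi(v/2)}$. On $A_3$, substituting $w=v-u$ and using $|v-w|>|v|/2$ yields the remaining terms $\e^{\Psi(v)}\|\nn^2\Psi(v)\||v|^{-(d+\alpha-1)}$ and $\e^{\Psi(v)}|v|^{-(d+\alpha)}$. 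The main obstacle is the middle annulus $A_2$: the symmetric cancellation is no longer available there, and the $\alpha$--trichotomy must be extracted carefully and matched to the stated form of $\tilde\Psi_{\beta,\gamma}$.

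Finally, for $\mu_2(\Theta^2)<\8$ under \eqref{D7-} and \eqref{D7--}, I combine $\Theta\le c_0\tilde\Psi_{\beta,\gamma}$ on $\{|v|\ge v^*\}$ with the local boundedness of $\Theta$ on $\{|v|<v^*\}$. Under \eqref{D7-} one has $\|\nn^2\Psi(v)\|\lesssim(1+|v|)^{-1}$, $\|\nn^3\Psi\|_\8<\8$, and $\sup_v|\Psi(v)-\Psi(v/2)|<\8$, the last giving $\e^{\Psi(v)}\e^{-\Psi(v/2)}\le C$. Squaring term-by-term, the purely ``local'' contributions to $\tilde\Psi_{\beta,\gamma}^2$ are bounded by $C$ times a factor dominated by $(1+|v|)^{-2}$ or $\log^2(1+|v|)$, hence integrable against the probability $\mu_2$; the two long-range terms $\e^{2\Psi(v)}|v|^{-2(d+\alpha)}$ and $\e^{2\Psi(v)}\|\nn^2\Psi(v)\|^2|v|^{-2(d+\alpha-1)}\lesssim\e^{2\Psi(v)}(1+|v|)^{-2(d+\alpha)}$, multiplied by the density $\e^{-\Psi(v)}/C_\Psi$ of $\mu_2$, reduce to $\e^{\Psi(v)}(1+|v|)^{-2(d+\alpha)}$, whose integrability is precisely the content of \eqref{D7--}. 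This closes the plan.
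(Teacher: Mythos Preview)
Your argument is correct and rests on the same two ingredients as the paper's proof: odd cancellation near the origin and radial monotonicity of $\Psi$ in the tail. The difference is organizational. You split the $u$-domain into three annuli $\{|u|\le1\}$, $\{1<|u|\le|v|/2\}$, $\{|u|>|v|/2\}$ and cancel the leading singular part on the first annulus by subtracting $\e^{-\Psi(v)}$ (respectively $\nn^2\Psi(v)u\,\e^{-\Psi(v)}$) and using oddness. The paper instead first symmetrizes both principal-value integrals via $u\mapsto -u$, so that the integrands become $\e^{-\Psi(v-u)}-\e^{-\Psi(v+u)}$ and the second difference $2\nn\Psi(v)-\nn\Psi(v-u)-\nn\Psi(v+u)$, each manifestly $O(|u|)$ or $O(|u|^2)$ near $0$; it then splits only at $|u|=1$ and decomposes $\{|u|\ge1\}$ into six pieces according to the sign of $\langle u,v\rangle$ and the sizes of $|v\pm u|$ and $|u|$ relative to $|v|$. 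Your three-annulus split reads the $\alpha$-trichotomy off $\int_1^{|v|/2}r^{-\alpha}\,\d r$ in one line and is a bit more streamlined; the paper's symmetrization makes the near-origin cancellation more explicit at the cost of a busier tail decomposition. Both routes produce exactly the same terms of $\tilde\Psi_{\beta,\gamma}$, and your final step for $\mu_2(\Theta^2)<\8$ is the same as the paper's.
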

\begin{proof} The proof is split into three parts.

(i) For $\Psi_{\beta,\gamma}$ introduced  in \eqref{W19},
via   change of variables, it holds that
\begin{equation*}
\begin{split}
\Psi_{\beta,\gamma}(v,y)&=-\beta\e^{\Psi(v)}\,{\rm p.v. }\int_{\R^d}\frac{ \<u,\nn^2\Psi(v)y\>\e^{-\Psi(  v+u)}}{|u|^{d+\alpha}}\,\d u\\
&\quad-\gamma\e^{ \Psi(v)}\,{\rm p.v. }\int_{\R^d }\frac{ \<\nn \Psi(v)-\nn \Psi( v+u),y\>  \e^{-\Psi(  v+u)} }{|u|^{d+\alpha}}\d u.
\end{split}
\end{equation*}
Thus, we have
\begin{equation}\label{e:eq-01}\begin{split}
\Psi_{\beta,\gamma}(v,y)&=\frac{1}{2}\beta\e^{\Psi(v)}\int_{\R^d}\frac{ \<u,\nn^2\Psi(v)y\>}{|u|^{d+\alpha}}\big(\e^{-\Psi(v-u)}-\e^{-\Psi(v+u)}\big)\,\d u\\
&\quad-\frac{1}{2}\gamma\e^{ \Psi(v)}\int_{\R^d}\frac{1}{|u|^{d+\alpha}}\big(\<\nn \Psi(v)-\nn \Psi( v-u), y\>\e^{-\Psi(v-u)}\\
&\qquad\qquad\qquad\qquad\qquad\quad+ \<\nn \Psi(v)-\nn \Psi( v+u),y\>\e^{-\Psi(v+u)}\big) \,\d u\\
&=:\frac{1}{2}\beta\e^{\Psi(v)}I_1(v,y)-\frac{1}{2}\gamma\e^{ \Psi(v)}I_2(v,y).
\end{split}\end{equation}

Notice  that
\begin{equation}\label{e:eq-02}
\begin{split}
|I_1(v,y)|&\le \|\nn^2\Psi(v)\| |y|\int_{\{|u|\ge 1\}}\frac{ 1}{|u|^{d+\alpha-1}}\big(\e^{-\Psi(v-u)}+\e^{-\Psi(v+u)}\big)\,\d u\\
&\quad +\|\nn^2\Psi(v)\| |y|\int_{\{|u|\le 1\}}\frac{1}{|u|^{d+\alpha-1}}\big|\e^{-\Psi(v-u)}-\e^{-\Psi(v+u)}\big|\,\d u \\
&=:I_{11}(v,y)+I_{12}(v,y).
\end{split}
\end{equation}
It is easy to see that  $$I_{11}(v,y)\le 2\, C_\Psi\, \|\nn^2\Psi(v)\|\, |y|$$
via change of variables, and that    $$I_{12}(v,y)\le 2\bigg(\int_{\{|u|\le 1\}}\frac{1}{|u|^{d+\alpha-2}}\,\d u\bigg) \|\nn^2\Psi(v)\| \| \nabla  \Psi   \|_\8 |y| $$
by the mean value theorem and $\Psi\ge0$.

On the other hand, it is obvious that
\begin{equation}\label{e:eq-03}
\begin{split}
I_2(v,y)&=\int_{\{|u|\le 1\}}\frac{1}{|u|^{d+\alpha}} \Psi_{1}(v,u,y)  \,\d u+\int_{\{|u|\le 1\}}\frac{1}{|u|^{d+\alpha}} \Psi_{2}(v,u,y)  \,\d u\\
&\quad+\int_{\{|u|\ge 1\}}\frac{1}{|u|^{d+\alpha}} \big(\Psi_{1}(v,u,y) + \Psi_{2}(v,u,y)\big)\,\d u,
\end{split}
\end{equation}
where
\begin{equation*}
\begin{split}
\Psi_{1}(v,u,y):&=\<\nn \Psi(v)-\nn \Psi( v-u), y\>(\e^{-\Psi(v-u)}-\e^{-\Psi(v+u)} ),\\
\Psi_{2}(v,u,y):&=\< 2\nn\Psi(v)-\nn\Psi(v-u)-\nn\Psi(v+u),y\>\e^{-\Psi(v+u)}.
\end{split}
\end{equation*}

Applying  the mean value theorem, besides $\Psi\ge0,$ yields
$$\int_{\{|u|\le 1\}}\frac{1}{|u|^{d+\alpha}} \Psi_{1}(v,u,y)  \,\d u\le 2\|\nabla \Psi\|^2_\8\bigg(\int_{\{|u|\le 1\}}\frac{1}{|u|^{d+\alpha-2}}\,\d u\bigg)|y|.$$
Furthermore, via change of variables again, it is ready to see that
$$\int_{\{|u|> 1\}}\frac{1}{|u|^{d+\alpha}} \Psi_{1}(v,u,y)  \,\d u\le 2C_\Psi\|\nn\Psi\|_\8 |y|.$$

With the aid of the facts that
\begin{equation*}
\nn\Psi(v+u)=\nn\Psi(v )+\nn^2\Psi(v ) u+\int_0^1\int_0^s  \nn \big(\nn^2\Psi(v+\theta u )u\big)u\,\d \theta\,\d s
\end{equation*}
and
\begin{equation*}
\nn\Psi(v-u)=\nn\psi(v )-\nn^2\Psi(v ) u+\int_0^1\int_0^s \nn\big(\nn^2\Psi(v-\theta u )u\big)u\,\d \theta\,\d s,
\end{equation*}
we find that
\begin{equation}\label{e:eq-04}
\begin{split}
 &2\nn\Psi(v )-\nn\Psi(v+u)-\nn\Psi(v-u)\\
 &=- \int_0^1\int_0^s\big( \nn \big(\nn^2\Psi(v+\theta u )u\big)u+ \nn\big(\nn^2\Psi(v-\theta u )u\big)u\big)\,\d \theta\,\d s.
\end{split}
\end{equation}
This obviously implies that
$$\int_{\{|u|\le 1\}}\frac{1}{|u|^{d+\alpha}} \Psi_{2}(v,u,y)  \,\d u\le 2\Big(\sup_{z\in B_1(v)}\|\nabla^3 \Psi(z)\|\Big)\left(\int_{\{|u|\le 1\}}\frac{1}{|u|^{d+\alpha-2}}\,\d u\right)|y|.$$ Furthermore, it is obvious that
$$\int_{\{|u|> 1\}}\frac{1}{|u|^{d+\alpha}} \Psi_{2}(v,u,y)  \,\d u\le 4C_\Psi\|\nn\Psi\|_\8  |y|.$$

Putting all the estimates above into \eqref{e:eq-01} and taking $\|\nn\Psi\|_\8<\8$, we obtain that $\Psi_{\beta,\gamma}(v, y)$ is well defined so that for any $v,y\in \R^d$,
$|\Psi_{\beta,\gamma}(v, y)|\le \Theta(v)|y|$, where $v\mapsto \Theta(v)$ is positive and locally bounded on $\R^d$.

(ii) In this part, we shall fix $v\in\R^d$ with $|v|\ge v^* $ and $y\in\R^d$. Let $I_{11}(v,y)$ and $I_{12}(v,y)$ be those defined in \eqref{e:eq-02}.
Taking  the  non-decreasing property of $\Psi$ into consideration,
  we derive   that
\begin{align*}
I_{11}(v,y)
&\le2\big\|\nn^2\Psi(v)\big\||y|\bigg(\int_{\{|u|\ge1\}\cap\{\<u,v\>\ge0\}\cap\{|v-u|\le\frac{1}{2}|v|\}}\frac{1}{|u|^{d+ \alpha-1}}  \e^{-\Psi(v-u)} \,\d u\\
&\qquad\qquad\qquad\qquad+\int_{\{|u|\ge1\}\cap\{\<u,v\>\le0\}\cap\{|v+u|\le\frac{1}{2}|v|\}}\frac{1}{|u|^{d+ \alpha-1}}  \e^{-\Psi(v+u)} \,\d u\\
&\qquad\qquad\qquad\qquad+\int_{\{|u|\ge1\}\cap\{\<u,v\>\ge0\}\cap\{|v-u|\ge\frac{1}{2}|v|\}\cap\{|u|\ge |v|\}}\frac{1}{|u|^{d+ \alpha-1}}  \e^{-\Psi(v-u)} \,\d u\\
&\qquad\qquad\qquad\qquad+\int_{\{|u|\ge1\}\cap\{\<u,v\>\le0\}\cap\{|v+u|\ge\frac{1}{2}|v|\}\cap\{|u|\ge |v|\}}\frac{1}{|u|^{d+ \alpha-1}}  \e^{-\Psi(v+u)} \,\d u\\
&\qquad\qquad\qquad\qquad+\int_{\{|u|\ge1\}\cap\{\<u,v\>\ge0\}\cap\{|v-u|\ge\frac{1}{2}|v|\}\cap\{|u|\le |v|\}}\frac{1}{|u|^{d+ \alpha-1}}  \e^{-\Psi(v-u)} \,\d u\\
&\qquad\qquad\qquad\qquad+\int_{\{|u|\ge1\}\cap\{\<u,v\>\le0\}\cap\{|v+u|\ge\frac{1}{2}|v|\}\cap\{|u|\le |v|\}}\frac{1}{|u|^{d+ \alpha-1}}  \e^{-\Psi(v+u)} \,\d u\bigg)\\
&\le 8\big\|\nn^2\Psi(v)\big\||y|\bigg[\frac{ C_\Psi}{(|v|/2)^{d+\alpha-1}}+ \bigg(\I_{\{1<\alpha<2\} } \int_{\{|u|\ge1\}}\frac{1}{|u|^{d+ \alpha-1}}\,\d u  \\
&\qquad\qquad\qquad\qquad\qquad\qquad\qquad\qquad+\I_{\{0<\alpha\le1\}} \int_{\{1\le |u|\le|v|\}}\frac{1}{|u|^{d+\alpha-1}}\,\d u\bigg) \, \e^{-\Psi(v/2)}     \bigg],
\end{align*}
 where  the last  display is valid
due to   $|u|\ge |v|-|v\pm u|\ge \frac{1}{2}|v|$ in case of $|v\pm u|\le \frac{1}{2}|v|$.

In view of $\|\nn\Psi\|_\8<\8$ and \eqref{W13}, we obviously have for some constant $C_0>0,$
 \begin{equation*}
 \sup_{|v|\ge v^*}\sup_{u\in B_1(v)}\bigg(\e^{\Psi(v)-\Psi(u)}\frac{|\nn\Psi(u)|}{|\nn \Psi(v)|}\bigg)\le C_0.
 \end{equation*}
With the help of this estimate,  we arrive at
 \begin{equation*}
 I_{12}(v,y) \le 2C_0\bigg(\int_{\{|u|\le 1\}}\frac{1}{|u|^{d+\alpha-2}}\,\d u\bigg)\big\|\nn^2\Psi(v)\big\||\nn \Psi(v)| \e^{-\Psi(v)}|y|.
 \end{equation*}

Subsequently, according to the estimates for $I_{11}(v,y)$ and $I_{12}(v,y)$,
 we conclude  that there exists a constant $C_1>0$ so that
  for all $v,y\in \R^d$ with $|v|\ge v^*$,
 \begin{equation}\label{W17}
 \begin{split}
 | I_1(v,y)|\le  C_1 \big\|\nn^2\Psi(v)\big\| \Big[ & |\nn \Psi(v)| \e^{-\Psi(v)}  + |v|^{-(d+\alpha-1)}  \\
 &+  \big(\I_{\{\alpha\in(1,2)\}}+\I_{\{\alpha=1\}}\log|v|+|v|^{1-\alpha}\I_{\{\alpha\in (0,1)\}}\big)\, \e^{-\Psi(v/2)} \Big]|y|.
 \end{split}
 \end{equation}

In the sequel, let $I_{21}(v,y)$, $I_{22}(v,y)$ and $I_{23}(v,y)$ be the those terms on the right hand side of \eqref{e:eq-03}. Below,    we  aim to treat them separately.
By invoking  $\|\nn\Psi\|_\8<\8$ and \eqref{W13}, we obtain that for some constant $C_2>0,$
 \begin{equation*}
 \begin{split}
  \big|I_{21}(v,y)\big|
 &=\bigg| \int_{\{|u|\le 1\}}\int_0^1\int_0^1\frac{1}{|u|^{d+\alpha}} \<\nn^2\Psi(v-su)u,y\>\\
 &\qquad\qquad\times\big(\e^{-\Psi(v-\theta u)}\<\nn \Psi(v-\theta u),u\>
 -\e^{-\Psi(v+\theta u)}\<\nn \Psi(v+\theta u),u\> \big )\,\d s\,\d\theta\,\d u\bigg|\\
 &\le C_2\bigg(\int_{\{|u|\le 1\}}\frac{1}{|u|^{d+\alpha-2}}\,\d u\bigg)\,\e^{-\Psi(v)}|\nn\Psi(v)|\,\|\nn^2\Psi(v) \|\,|y|.
 \end{split}
\end{equation*}
Next, with the aid of \eqref{e:eq-04}, we can get from  $\|\nn\Psi\|_\8<\8$ and \eqref{W13}  that for some constant $C_3>0,$
\begin{equation*}
\begin{split}
\big|I_{22}(v,y)\big|& =\bigg|\int_{\{|u|\le 1\}}\int_0^1\int_0^s\frac{1}{|u|^{d+\alpha}}\big( \big\<\nn\big(\nn^2\Psi(v+\theta u )u+\nn^2\Psi(v-\theta u )u\big)u,y\big\>\\
&\qquad\qquad\qquad\qquad\qquad\quad\,\,\times  \e^{-\Psi(v+u)}\,\d \theta\,\d s\,\d u\bigg|\\
&\le C_3\bigg(\int_{\{|u|\le 1\}}\frac{1}{|u|^{d+\alpha-2}}\,\d u\bigg)\,\e^{-\Psi(v)}\|\nn^3\Psi(v )  \| |y|.
\end{split}
\end{equation*}
  According to the definitions of $\Psi_1$ and $\Psi_2$, along with the non-decreasing property of the mapping $|v|\mapsto \Psi(v)=\Psi(|v|)$, we readily  deduce  that
\begin{align*}
 |I_{23}(v,y)|
 &\le6\|\nn\Psi\|_\8|y|\int_{\{|u|\ge1\}\cap\{\<u,v\>\ge0\}}\frac{1}{|u|^{d+\alpha}}    \e^{-\Psi(v-u)} \,\d u\\
 &\quad+6\|\nn\Psi\|_\8|y|\int_{\{|u|\ge1\}\cap\{\<u,v\>\le0\} }\frac{1}{|u|^{d+\alpha}}\e^{-\Psi(v+u)}\,\d u\\
&\le     6\|\nn\Psi\|_\8|y|\bigg(\int_{\{|u|\ge1\}\cap\{|v-u|\le\frac{1}{2}|v|\}}\frac{1}{|u|^{d+\alpha}}  \e^{-\Psi(v-u)} \,\d u\\
 &\qquad\qquad\qquad\quad+\int_{\{|u|\ge1\}\cap\{|v+u|\le\frac{1}{2}|v|\}}\frac{1}{|u|^{d+\alpha}}     \e^{-\Psi(v+u)} \,\d u\bigg)\\
&\quad+ 6\|\nn\Psi\|_\8|y|\bigg(\int_{\{|u|\ge1\}\cap\{|v-u|\ge\frac{1}{2}|v|\}}\frac{1}{|u|^{d+\alpha}}     \e^{-\Psi(v-u)} \,\d u\\
 &\qquad\qquad\qquad\quad+\int_{\{|u|\ge1\}\cap\{|v+u|\ge\frac{1}{2}|v|\}}\frac{1}{|u|^{d+\alpha}}   \e^{-\Psi(v+u)} \,\d u\bigg).
\end{align*}
Consequently, by utilizing  the fact that $|u|\ge |v|-|v\pm u|\ge \frac{1}{2}|v|$ as long as  $|v\pm u|\le \frac{1}{2}|v|$, we derive from $C_\Psi<\8$ and $\|\nn\Psi\|_\8$ that there exists a  constant   $C_4 >0 $ such that
\begin{equation*}
\begin{split}
 |I_{23}(v,y)|\le C_4\big(   |v|^{-(d+\alpha)}        +   \e^{-\Psi(v/2)}\big)|y|.
 \end{split}
\end{equation*}
This, in addition to the estimates concerned with $I_{21}(v,y)$ and $I_{22}(v,y)$, leads to
\begin{equation}\label{W16}
\begin{split}
|I_2(v,y)|&\le C_5\e^{-\Psi(v)}\big(|\nn\Psi(v)|\,\|\nn^2\Psi(v) \|  + \|\nn^3\Psi(v )\| \big)|y|  \\
&\quad+ C_6   \big(   |v|^{-(d+\alpha)}        +   \e^{-\Psi(v/2)}\big)|y|
\end{split}
\end{equation}
for some constants $C_5,C_6>0$.

Therefore, the desired assertion \eqref{W14} follows by combining \eqref{W17} with \eqref{W16}.

(iii) Finally, since $\Theta(v)$ is locally bounded, the last assertion follows from \eqref{W14} (in particular, the estimate for $\tilde\Psi_{\beta,\gamma}(v)$),   \eqref{D7-} and \eqref{D7--}, as well as some calculations.
\end{proof}

\begin{proposition}\label{P4}
Assume that $({\bf A}_U)$ and $({\bf A}_\Phi)$ are satisfied. Then, the Assumption $({\bf H}_4)$ holds true.
\end{proposition}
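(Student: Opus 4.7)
The plan is to verify $({\bf H}_4)$ with $\lambda=c^\star$, the constant from Lemma \ref{lem3.5}. First I would record that $\mathfrak D\subset\mathcal D(G)$: since $G=-\pi\mathscr L_0^2\pi$ annihilates $(I-\pi)f$ and, by Lemma \ref{lem3.5}, $G\pi f=-c^\star \mathscr L_{OD}\pi f$, the smoothness of $\pi f\in C_b^\infty(\R^d)$ together with $({\bf A}_U)$ makes $Gf$ a bona fide element of $L^2(\mu)$. Then for any $f\in\mathfrak D$ I would split
\begin{equation*}
\langle B_{c^\star}\mathscr L(I-\pi)f,f\rangle_2=\langle B_{c^\star}\mathscr L_0(I-\pi)f,f\rangle_2+\langle B_{c^\star}\mathscr L_1(I-\pi)f,f\rangle_2
\end{equation*}
and control the two summands separately.

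For the first summand, I would use that $\pi B_{c^\star}=B_{c^\star}$, so $B_{c^\star}\mathscr L_0(I-\pi)f\in H_0$ and its pairing with $(I-\pi)f\in H_0^\perp$ vanishes by orthogonality, leaving
\begin{equation*}
\langle B_{c^\star}\mathscr L_0(I-\pi)f,f\rangle_2=\langle(I-\pi)f,(B_{c^\star}\mathscr L_0(I-\pi))^*\pi f\rangle_2.
\end{equation*}
Lemma \ref{lem3.7} bounds the right-hand factor by $c\|\pi f\|_2$, and Cauchy--Schwarz gives a bound of the form $c\|\pi f\|_2\|(I-\pi)f\|_2$, as desired.

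For the second summand, I would again exploit $\pi B_{c^\star}=B_{c^\star}$, which forces $B_{c^\star}^*(I-\pi)=0$, and rewrite
\begin{equation*}
\langle B_{c^\star}\mathscr L_1(I-\pi)f,f\rangle_2=\langle(I-\pi)f,\mathscr L_1^*B_{c^\star}^*\pi f\rangle_2.
\end{equation*}
Lemma \ref{lem3.8} identifies $\mathscr L_1^*B_{c^\star}^*\pi f(x,v)$ as precisely the quantity $\Psi_{\beta,\gamma}(v,\nabla u_{\pi f}(x))$ of Lemma \ref{lem3.6} (after the change of variables $u=v-\bar v$), with $\Psi=\Phi$, $\beta=C_{d,2-\alpha}(d+\alpha-2)/c^\star$, and $\gamma=c_{d,\alpha}/c^\star$. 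Since $({\bf A}_{\Phi,1})$--$({\bf A}_{\Phi,3})$ supply exactly the hypotheses of Lemma \ref{lem3.6}, the latter yields a pointwise dominant $\Theta(v)$ with $\mu_2(\Theta^2)<\infty$ and
\begin{equation*}
|\mathscr L_1^*B_{c^\star}^*\pi f(x,v)|\le\Theta(v)\,|\nabla u_{\pi f}(x)|.
\end{equation*}
Hence $\|\mathscr L_1^*B_{c^\star}^*\pi f\|_2^2\le\mu_2(\Theta^2)\,\mu_1(|\nabla u_{\pi f}|^2)$, and the Poisson-equation regularity estimate \cite[Proposition 5]{CHSG} (already invoked in the proof of Lemma \ref{lem3.7}) bounds the $\mu_1$-integral by $C\|\pi f\|_2^2$. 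Cauchy--Schwarz then yields the required estimate for this piece as well, completing the proof of $({\bf H}_4)$ with $\alpha_3=\alpha_3(c^\star)$ built from the constants of Lemmas \ref{lem3.6}--\ref{lem3.8}.

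The main obstacle is the nonsymmetric piece: one must identify the adjoint kernel produced by Lemma \ref{lem3.8} as an instance of the abstract Lemma \ref{lem3.6}, and crucially show that $\Theta(v)\in L^2(\mu_2)$. The exponential factor $\e^{\Phi(v)}$ generated by duality (precisely because $\mathscr L_1$ is not $L^2(\mu)$-symmetric) is what forces the delicate annular control in $({\bf A}_{\Phi,2})$ and the weighted integrability $\int\e^{\Phi(v)}/(1+|v|)^{2(d+\alpha)}\,\d v<\infty$ in $({\bf A}_{\Phi,3})$; together they are engineered to ensure $\mu_2(\Theta^2)<\infty$, and this is where the proof really uses the full strength of $({\bf A}_\Phi)$ beyond what was needed for $({\bf H}_1)$--$({\bf H}_3)$.
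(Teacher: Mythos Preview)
Your proposal is correct and follows essentially the same route as the paper: split $\mathscr L=\mathscr L_0+\mathscr L_1$, dualize each term using $\pi B_{c^\star}=B_{c^\star}$, invoke Lemma~\ref{lem3.7} for the $\mathscr L_0$-piece, and for the $\mathscr L_1$-piece combine Lemma~\ref{lem3.8} with Lemma~\ref{lem3.6} (taking $\Psi=\Phi$, $\beta=C_{d,2-\alpha}(d+\alpha-2)/c^\star$, $\gamma=c_{d,\alpha}/c^\star$, $y=\nabla u_{\pi f}(x)$) to get the pointwise bound $\Theta(v)|\nabla u_{\pi f}(x)|$, then finish with $\mu_2(\Theta^2)<\infty$ and the $H^1$-regularity of the Poisson solution. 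The only cosmetic difference is that the paper cites \cite[Corollary~30]{ADNR} for the bound $\mu_1(|\nabla u_{\pi f}|^2)\le C\|\pi f\|_2^2$ rather than \cite[Proposition~5]{CHSG}, but these are interchangeable here.
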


\begin{proof}
To show the Assumption $({\bf H}_4)$, it is sufficient to prove respectively that there exist constants $c_1,c_2>0$ such that
for all $f\in \mathfrak D$,
\begin{equation}\label{W3}
|\<B_{c^\star}\mathscr L_0(I-\pi)f,f\>_2|\le c_1\|\pi f\|_2\|(I-\pi)f\|_2
\end{equation}
and
\begin{equation}\label{W4}
|\<B_{c^\star}\mathscr L_1(I-\pi)f,f\>_2|\le c_2\|\pi f\|_2\|(I-\pi)f\|_2.
\end{equation}

Due to $\pi B_{c^\star}=B_{c^\star}$ and $(I-\pi)^2=I-\pi$, it is easy to see that
\begin{equation*}
\begin{split}
\big|\<B_{c^\star}\mathscr L_0(I-\pi)f,f\>_2\big|&=\big|\<(I-\pi)f,(B_{c^\star}\mathscr L_0(I-\pi))^*\pi f\>_2\big|\\
&\le \|(I-\pi)f\|_2\|(B_{c^\star}\mathscr L_0(I-\pi))^*\pi f\|_2.
\end{split}
\end{equation*}
Whence,  \eqref{W3} follows from Lemma \ref{lem3.7}.

Again, by means of $\pi B_{c^\star}=B_{c^\star}$, we infer that
 for all $f\in\mathfrak D$,
\begin{equation*}
\<B_{c^\star}\mathscr L_1(I-\pi)f,f\>_2=\< (I-\pi)f, \mathscr L_1^* B^*_{c^\star}\pi f\>_2
\le  \|(I-\pi)f\|_2\| \mathscr L_1^* B^*_{c^\star}\pi f\|_2.
\end{equation*}
Next, taking  Lemma \ref{lem3.8} into consideration gives that
\begin{equation*}
\begin{split}
\mathscr L_1^* B^*_{c^\star}\pi f (x,v)&=\frac{1}{c^\star} C_{d,2-\alpha}(d+\alpha-2)\e^{\Phi(v)}\,\mbox{\rm p.v.}\int_{\R^d}\frac{ \<u,\nn^2\Phi(v)\nn u_{\pi f}(x)\>\e^{-\Phi(v-u)}}{|u|^{d+\alpha}}\,\d u\\
&\quad -\frac{1}{c^\star}c_{d,\alpha}\e^{ \Phi(v)}\,\mbox{\rm p.v.}\int_{\R^d }\frac{ \<\nn \Phi(v)-\nn \Phi( v-u),\nn u_{\pi f}(x)\>  \e^{-\Phi(v-u)} }{|u|^{d+\alpha}}\d u.
\end{split}
\end{equation*}
Subsequently, applying Lemma \ref{lem3.6} with $\beta=\frac{1}{c^\star} C_{d,2-\alpha}(d+\alpha-2)$, $\gamma=\frac{1}{c^\star}c_{d,\alpha}$ and $y=\nn u_{\pi f}(x)$ yields that for some constant $c_0>0$,
\begin{equation*}
|\mathscr L_1^* B^*_{c^\star}\pi f (x,v)|\le c_0\Theta(v) |\nn u_{\pi f}(x)|,
\end{equation*}where $\Theta(v)$ is given in Lemma \ref{lem3.6}.
Consequently, \eqref{W4} is valid  thanks to $\mu( \Theta^2)<\8$ and \cite[Corollary 30]{ADNR}.
\end{proof}

\begin{remark}\label{Rem:add1}To verify Assumption $({\bf H}_4)$,  we turn to the inequalities \eqref{W3} and \eqref{W4}. As for \eqref{W3}, we make use of the regularity properties of the Poisson equation \eqref{D12} associated with the Hamiltonian operator (i.e., the anti-symmetric part) $\mathscr L_0$ given in \eqref{W2}. The approach is inspired by the previous work in the Brownian motion setting; see \cite{CHSG}. However, to obtain \eqref{W4} it is extremely non-trivial. Note that, since the operator $\mathscr L_1$ is not only non-local but also  non-symmetric, the expression  \eqref{W27} for the dual operator $\mathscr L_1^* B^*_{c^\star}$ is a little bit complex, and, in particular, $\mathscr L_1^* B^*_{c^\star}$ does not enjoy the chain rule property. On the other hand, in order to establish the bound for $\| \mathscr L_1^* B^*_{c^\star}\|_{2\to 2}$, we need some explicit estimates as stated in Lemma \ref{lem3.6}, which in turn require the boundedness condition \eqref{D7-} and
the integrability condition \eqref{D7--}.
 This partly points out the reason that why we impose  $\beta<2\alpha$ in Corollary \ref{cor}.
\end{remark}

With the previous preparations at hand, we are in position to
complete the
\begin{proof}[Proof of Theorem $\ref{thm1}$]
Since $x\mapsto \e^{-U(x)}$ and $v\mapsto \e^{-\Phi(v)}$
are integrable, $C_U,C_\Phi\in(0,\8)$. Due to the uniform boundedness of $\nn\Phi$ and $\nn^2\Phi$ (see $({\bf A}_{\Phi,2})$), we have $\mu_2( |\nn \Phi|^4+\|\nn^2\Phi\|^2)<\8$ and $0<\mu_2(|\nn \Phi|^2)<\8$ right now.
On the other hand, according to the function that $\Phi$ is radial and the boundedness of $|\nn \Phi|$ as well as the fact that $v\mapsto \e^{-\Phi(v)}$
is integrable, $\lim_{|v|\to\8}|\nn\e^{-\Phi(v)}|=0
$. In particular, \eqref{D3} holds.
Under the Assumption $({\bf A}_{U,2})$, $\mu_1$ satisfies the Poincar\'{e} inequality \eqref{E4} (see e.g. \cite[Corollary 1.6]{BBCG}); under the Assumption $({\bf A}_{\Phi,4})$, $\mu_2$ satisfies the Poincar\'{e} inequality \eqref{E14} (see e.g. \cite[Theorem 1.1]{WW}). Therefore, all the assumptions imposed on Propositions \ref{P1} and \ref{P3} are fulfilled. Furthermore, under the Assumptions $({\bf A}_{U})$-$({\bf A}_{\Phi})$, all preconditions in Lemmas \ref{lem3.5}-\ref{lem3.8} are satisfied so that  Proposition \ref{P4} is available. Thus,
the proof of Theorem \ref{thm1} is finished by applying Theorem \ref{thm} and
taking Propositions \ref{P1}, \ref{P3} and \ref{P4} into account.
\end{proof}

In the end, we finish the
\begin{proof}[Proof of Corollary $\ref{cor}$] According to the expression of $\Phi$, we have $\Phi\in C^3(\R^d;\R_+)$,
  $\psi(r)=\frac{1}{2}(d+\beta)\log(1+r)$, $r\ge0,$ which is non-decreasing, and $v\mapsto\e^{-\Phi(v)}$ is integrable. Hence, the Assumption $({\bf A}_{\Phi,1})$ is verified. Again, in terms of the form of $\Phi$, we find that
 \begin{equation*}
  \Phi(v)-\Phi(v/2) =\frac{1}{2}(d+\beta)\big(\log(1+|v|^2 )-\log(1+|v|^2/4 )\big)\le (d+\beta)\log 2.
 \end{equation*}
 Next, note that
  \begin{equation}\label{EEE}
\nn\Phi(v)=\frac{(d+\beta)v}{1+|v|^2},\qquad \nn^2\Phi(v)=(d+\beta)\bigg(\frac{1}{1+|v|^2}I_{d\times d}-\frac{2(v\otimes v)}{(1+|v|^2)^2}\bigg).
\end{equation}
Thus, all   assumptions in $({\bf A}_{\Phi,2})$ are satisfied.  Due to $\beta<2\alpha$, we deduce that
 $$\int_{\R^d} \frac{\e^{\Phi(v)}}{(1+|v|)^{2(d+\alpha)}}\,\d v<\infty.$$
This, together with \eqref{EEE}, leads to the fulfillment of the Assumption $({\bf A}_{\Phi,3})$. At last, thanks to $\beta\ge\alpha$,
we conclude that the Assumption $({\bf A}_{\Phi,4})$ is available. Therefore, the proof is complete.
\end{proof}

\ \

\noindent \textbf{Acknowledgements.}
The research is supported by the National Key R\&D Program of China (2022YFA1006000) and  NSF of China (Nos.\ 11831014, 12071340, 12071076 and 12225104).

\end{document}